\newcommand{\apref}[3]{\hyperref[#2]{#1\ref*{#2}#3}}
\theoremstyle{plain}
\newtheorem{prop}{Proposition}[section]
\newtheorem{lemma}[prop]{Lemma}
\newtheorem{thm}[prop]{Theorem}
\newtheorem*{thmAnn}{Theorem A}
\newtheorem*{thmBnn}{Theorem B}
\theoremstyle{definition}
\theoremstyle{remark}
\DeclareMathOperator{\Primlength}{PL}
\DeclareMathOperator{\base}{base}
\DeclareMathOperator{\PSL}{PSL}
\DeclareMathOperator{\PGL}{PGL}
\DeclareMathOperator{\Tr}{Tr}
\DeclareMathOperator{\Rea}{Re}
\DeclareMathOperator{\Fix}{Fix}
\DeclareMathOperator{\bd}{bd}
\DeclareMathOperator{\Per}{Per}
\newcommand{\st}{\text{st}}
\newcommand\N{\mathbb{N}}
\newcommand\R{\mathbb{R}}
\newcommand\Z{\mathbb{Z}}
\newcommand\C{\mathbb{C}}
\newcommand{\h}{\mathbb{H}}
\newcommand{\mc}[1]{\mathcal #1}
\newcommand{\wt}{\widetilde}
\newcommand{\wh}{\widehat}
\DeclareMathOperator{\id}{id}
\DeclareMathOperator{\Fct}{Fct}
\newcommand{\sceq}{\mathrel{\mathop:}=}
\newcommand{\mat}[4]{\begin{pmatrix} #1&#2\\#3&#4\end{pmatrix}}
\newcommand{\bmat}[4]{\begin{bmatrix} #1&#2\\#3&#4\end{bmatrix}}
\newcommand{\textbmat}[4]{\left[\begin{smallmatrix} #1&#2 \\ #3&#4
\end{smallmatrix}\right]}
\begin{document}

\title[Thermodynamic formalism for infinite-area Hecke triangle surfaces]{A thermodynamic formalism approach to the Selberg zeta function for Hecke triangle surfaces of infinite area}
\author[A.\@ Pohl]{Anke D.\@ Pohl}
\address{Mathematisches Institut, Georg-August-Universit\"at G\"ottingen,  Bunsenstr. 3-5, 37073 G\"ottingen}
\email{pohl@uni-math.gwdg.de}
\subjclass[2010]{Primary: 37D40, 37C30; Secondary: 37B10}
\keywords{Hecke triangle group, infinite area, transfer operator, Selberg zeta function, geodesic flow, billiard flow, cross section, symbolic dynamics}
\begin{abstract} 
We provide an explicit construction of a cross section for the geodesic flow on infinite-area Hecke triangle surfaces which allows us to conduct a transfer operator approach to the Selberg zeta function. Further we construct closely related cross sections for the billiard flow on the associated triangle surfaces and endow the arising discrete dynamical systems and transfer operator families with two weight functions which presumably encode Dirichlet respectively Neumann boundary conditions. The Fredholm determinants of these transfer operator families constitute dynamical zeta functions, which provide a factorization of the Selberg zeta function of the Hecke triangle surfaces.
\end{abstract}
\thanks{The author acknowledges the support by the Volkswagen Foundation}
\maketitle


\section{Introduction}

Selberg zeta functions are important objects in the study of the spectral theory of Riemannian locally symmetric spaces (or, more generally, orbifolds). They were introduced by Selberg \cite{Selberg} in 1956 for compact quotients of the hyperbolic plane, motivated by his study of automorphic forms for uniform Fuchsian groups. Subsequently these zeta functions were intensively studied and generalized to non-compact quotients, general rank one spaces and even higher rank spaces. They contributed significantly to the cross-fertilization of ideas in various subject areas, ranging from analytic number theory to quantum chaos. We refer to \cite{Venkov_book, Fischer, Hejhal1, Hejhal2, Deitmar_zeta_higher, Borthwick_book} and the references therein for more details. 

Since these dynamical zeta functions are defined by an infinite product over the length spectrum of the space under consideration which only converges in some half space, a crucial step in their investigation is to show the existence of a meromorphic continuation. For Selberg zeta functions of hyperbolic Riemannian orbifolds of the form $X = \Gamma\backslash\h$,
where $\h$ denotes the hyperbolic plane and $\Gamma$ is Fuchsian group (possibly being non-torsionfree or non-cofinite), this is typically done by Selberg theory, Lax--Phillips scattering theory or geometric scattering theory. These methods are very elegant and powerful, in particular they allow to continue to develop a rich theory of the properties and interpretations of zeros and poles as well as manifold applications. However, they have the drawback that even for the proof of basic properties a significant amount of theory is needed. 

Over the last 25 years another method emerged within the framework of the thermodynamic formalism of statistical mechanics, as pioneered by Ruelle \cite{Ruelle_formalism, Ruelle_zeta} and Mayer \cite{Mayer_thermo, Mayer_thermoPSL}. By exploiting the dynamics of the geodesic flow on $X$ rather than the geometry of $X$ these so-called transfer operator approaches provide alternative proofs of some findings obtained by the previously mentioned methods, or even complementary results. In particular, the existence of meromorphic continuations is easier to prove. For such a transfer operator approach to the Selberg zeta function it is essential to have a discretization of the geodesic flow which gives rise to a uniformly expanding discrete dynamical system $(D,F)$ on a family of subsets of $\R$ such that the associated family of transfer operators (weighted evolution operators of functions $f\colon D\to\C$)
\[
 \mc L_{F,s} f(x) \sceq \sum_{y\in F^{-1}(x)} |F'(y)|^{-s} f(y)
\]
represents the Selberg zeta function via its Fredholm determinant (we refer to Section~\ref{sec:prelims} for a more details). At the time being, this requirement results in the disadvantage that such approaches are not yet established for arbitrary Fuchsian groups. However, they are known for various cofinite ones (and also for some lattices containing orientation-reversing isometries) \cite{Mayer_thermoPSL, Pollicott, Morita_transfer, Fried_triangle, Mayer_Muehlenbruch_Stroemberg, 
Moeller_Pohl, Pohl_spectral_hecke}. For non-cofinite Fuchsian groups, up to date, they could only be performed for Fuchsian Schottky groups \cite{Guillope_Lin_Zworski,Naud_resonancefree}. In this case, the associated orbifolds have infinite area, no singularity points and no cusps. 

In this article we consider for the first time a family of infinite-area hyperbolic orbifolds with one cusp and one elliptic point, namely the Hecke triangle surfaces of infinite area. Our first main result can roughly be summarized as follows.

\begin{thmAnn}
For any Hecke triangle surface $X$ of infinite area, there exists a discretization of the geodesic flow such that, for $\Rea s > 1/2$, the arising family of transfer operators $\mc L_s$ is nuclear of order $0$ on a certain Banach space of holomorphic functions. Its Fredholm determinant represents the Selberg zeta function
\[ 
 Z_X(s) = \det(1-\mc L_s)
\]
for sufficiently large $\Rea s$. The map $s\mapsto \mc L_s$ extends meromorphically to all of $\C$ with possible poles at $s=(1-k)/2$, $k\in\N_0$, of order at most $2$. Consequently, also $Z_X$ admits a meromorphic continuation with these properties.
\end{thmAnn}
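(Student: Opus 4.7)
The plan is to follow the thermodynamic-formalism blueprint pioneered by Mayer and extended to various cofinite Hecke triangle groups in the author's earlier work \cite{Moeller_Pohl,Pohl_spectral_hecke}, adapted to the present infinite-area setting. The proof naturally divides into four steps: a geometric construction of a cross section for the geodesic flow on $X$; a functional-analytic verification that the resulting family of transfer operators is nuclear of order zero on a suitable Banach space of holomorphic functions; a periodic-orbit identification of the Fredholm determinant with the Selberg zeta function; and the meromorphic continuation of $s\mapsto\mc L_s$.

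For the first step, the main difficulty is that on an infinite-area surface most geodesics escape into the funnels in at least one time direction and never return to any bounded cross section, so the cross section must be tailored so that exactly the recurrent part of the geodesic flow is captured. Starting from the standard Hecke triangle fundamental domain in $\h$ and exploiting its reflection symmetry, I would select the cross section sheets along certain geodesic arcs bounding this domain and verify that the first-return map factors through the finitely many side-pairing elements of $\Gamma$. This yields a discrete dynamical system $(D,F)$ on a finite union of open intervals in $\R$ whose branches are restrictions of M\"obius transformations from $\Gamma$, in direct analogy with the cofinite Hecke case.

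Each inverse branch of $F$ then extends to a M\"obius transformation whose action on a carefully chosen disk neighborhood $U$ of the corresponding interval is a strict contraction; the weighted composition operator with cocycle weight $|F'|^{-s}$ is therefore nuclear of order zero on the Banach space of bounded holomorphic functions on $U$, and a finite sum of such operators inherits this property. The cocycle bounds give absolute convergence for $\Rea s > 1/2$ and hence a well-defined $\det(1-\mc L_s)$ on that half-plane. Matching with the Selberg zeta function then proceeds along standard thermodynamic-formalism lines: the Fredholm expansion $\exp\bigl(-\sum_n \tfrac1n \tr \mc L_s^n\bigr)$ reduces via the holomorphic fixed-point formula to a sum over periodic points of $F^n$, and these are identified with the primitive closed geodesics of $X$, with lengths read off as $\log|F'|$ along the orbit, reproducing the Euler product for $Z_X$.

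The meromorphic continuation in the last step is the most delicate part and is where the announced pole structure originates. The parabolic generator stabilizing the cusp gives rise to a branch of $F$ whose pre-images accumulate at the cuspidal fixed point; resumming the infinite tower of contributions from this branch produces a Lerch-type series in $s$ which, following Mayer's analysis for $\PSL(2,\Z)$, can be identified with a Hurwitz-type zeta function and continued meromorphically with poles at $s=(1-k)/2$, $k\in\N_0$. The new feature in the infinite-area setting is the presence of additional branches associated with the funnel boundary; their separate meromorphic extensions may superpose with the cuspidal contribution at the same half-integers, which accounts for the pole order being at most two rather than one. I expect the main technical obstacle to lie precisely here: in the first step one must choose the cross section so that the funnel branches enjoy holomorphic-contraction domains past the spectral threshold and so that their meromorphic contributions remain organized at the half-integers, without producing extraneous singularities.
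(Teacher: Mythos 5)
Your overall strategy coincides with the paper's: an explicit cross section yielding a piecewise-M\"obius interval map, nuclearity of the transfer operators on a Banach space of holomorphic functions on disks, identification of the Fredholm determinant with $Z_X$ via periodic orbits, and meromorphic continuation by resumming the parabolic tower into a Hurwitz--Lerch type zeta function. Two points in your sketch are off the mark, and one of them concerns precisely the step you single out as the main obstacle.

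First, the funnel plays no role in the pole structure. In the paper the funnel enters only negatively: the set $(-\lambda/2,-1)\cup(1,\lambda/2)$ representing the funnel contains no ``standard'' points, so it is simply deleted from the domains of the branches; there are no ``funnel branches'' whose meromorphic contributions could superpose with the cuspidal one, and no issue of ``extraneous singularities'' from the funnel arises. The order-$2$ bound for the poles of $\det(1-\mc L_s)$ has a different source: the single cusp at $\infty$ produces \emph{two} one-sided parabolic towers, $\sum_{n\in\N}\tau_s(g_1^{-n})$ and $\sum_{n\in\N}\tau_s(g_1^{n})$, sitting in two different entries of the $4\times 4$ matrix operator $\mc L_{I,s}$. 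Each tower continues with simple poles at $s=(1-k)/2$ and rank-one residue, so the residue of the continued operator has rank at most $2$, and it is this rank bound that gives the determinant poles of order at most $2$ (the operator itself has poles of order at most $1$). Your proposed mechanism would leave you hunting for funnel singularities that do not exist and would not account for the factor of $2$.

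Second, ``the first-return map factors through the finitely many side-pairing elements'' is not enough: the side pairings of the Hecke fundamental domain are $T$ (parabolic) and $S$ (elliptic of order $2$), and neither is expanding, so a system whose branches are literally these elements cannot yield a nuclear operator even after resumming the parabolic tower. The paper first reduces the cross section of \cite{Pohl_Symdyn2d} to $C'_a\cup C'_b$, on which the branches become the hyperbolic elements $g_2=T^{-1}S$ and $g_3=TS$ together with $T^{\pm1}$, and only then accelerates the parabolic branches into the infinite towers. You do carry out the acceleration, but the elimination of the elliptic generator --- and the verification that reduction and acceleration preserve the bijection between equivalence classes of periodic orbits and conjugacy classes of hyperbolic elements, which the paper inherits from Proposition~\ref{bij_first} and records as Proposition~\ref{bij_final} --- is a step your outline skips and which cannot be dispensed with.
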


The proof of Theorem~A is constructive. Its main bulk, which we conduct in Section~\ref{sec:systems}, consists in providing an explicit cross section (in the sense of Poincar\'e) for the geodesic flow on $X$, and using it to construct a discrete dynamical system with the required properties. We study the properties of the arising family of transfer operators in Section~\ref{sec:domain}. The combination of Theorems~\ref{nuclear} and \ref{fredholm} below then shows Theorem~A.

We will observe that the cross section is invariant under the orientation-reversing Riemannian isometry $J\colon z\mapsto -\overline z$. This external symmetry is inherited by the discrete dynamical system and the family of transfer operators, and results in a factorization of the Selberg zeta function. To achieve a deeper understanding of this phenomen, we consider the extension of the Hecke triangle group $\Gamma$ by $J$, which gives the underlying triangle group $\wt\Gamma \subseteq \PGL2(\R)$. We then investigate, in Section~\ref{sec:billiard}, the billiard flow on the triangle surface $\wt\Gamma\backslash\h$, modify the cross sections from Theorem~A into cross sections for this flow and endow the arising discrete dynamical systems and transfer operators with two different weight functions, which presumably correspond to Dirichlet resp.\@  Neumann boundary conditions. Our second main result is a more explicit version of the following theorem, being proven as Theorems~\ref{fredtriangle} and \ref{explicit} 
below.

\begin{thmBnn} 
For $\Rea s > 1/2$, the two weighted families $\mc L_s^\pm$ of transfer operators, which arise from the discretization of the billiard flow on $\wt X \sceq \wt\Gamma\backslash\h$, are nuclear operators of order $0$ on a certain Banach space of holomorphic functions, and the maps $s\mapsto \mc L_s^\pm$ extend meromorphically to all of $\C$ with possible poles at $s=(1-k)/2$, $k\in\N_0$, of order at most $1$. Thus, also their Fredholm determinants
\[
 Z_\pm(s) \sceq \det\left(1-\mc L_s^\pm\right)
\]
define meromorphic functions on $\C$. They factorize the Selberg zeta function $Z_X$ as
\[
 Z_X(s) = Z_+(s) Z_-(s).
\]
Moreover, for sufficiently large $\Rea s$, the functions $Z_\pm$ are given as dynamical zeta functions in terms of the length spectrum of $\wt X$.
\end{thmBnn}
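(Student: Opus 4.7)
The plan is to derive Theorem~B from Theorem~A by exploiting the external $J$-symmetry, $J\colon z\mapsto -\overline z$, of the cross section constructed for Theorem~A. Since $\wt\Gamma = \langle\Gamma,J\rangle$, the billiard flow on $\wt X = \wt\Gamma\backslash\h$ is exactly the $J$-quotient of the geodesic flow on $X = \Gamma\backslash\h$. I would therefore push the cross section from Section~\ref{sec:systems} forward along the two-to-one projection $\Gamma\backslash\h \to \wt\Gamma\backslash\h$ to obtain a cross section for the billiard flow, together with an induced first-return map on (essentially) ``half'' of the previous domain. The key observation is that a geodesic segment on $X$ either projects to a billiard segment on $\wt X$ that avoids the reflection axis, or hits it and reflects; in symbolic terms, each iteration of the billiard return map is either an iteration of the geodesic return map or a composition of such with $J$. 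This gives a natural description of the first-return map and identifies functions on the billiard cross section with $J$-equivariant functions on the geodesic cross section.

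Having set this up, I would define the two weighted transfer operators $\mc L_s^\pm$ by inserting the weights $\pm 1$ on each branch of the inverse billiard return map that involves a reflection. The formal verification is then the decomposition
\[
 \mc L_s \;=\; \mc L_s^+ \oplus \mc L_s^-
\]
arising from the splitting of the function space underlying $\mc L_s$ into its $J$-invariant and $J$-anti-invariant parts, which are closed subspaces and inherit the Banach space structure because $J$ acts as an (anti)holomorphic involution on the cross section domain. The $+$ and $-$ parts presumably encode Neumann respectively Dirichlet boundary conditions along the reflection locus on $\wt X$. From this direct sum one immediately reads off multiplicativity of Fredholm determinants,
\[
 Z_X(s) \;=\; \det(1-\mc L_s) \;=\; \det(1-\mc L_s^+)\,\det(1-\mc L_s^-) \;=\; Z_+(s)\,Z_-(s).
\]

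The nuclearity of $\mc L_s^\pm$ of order $0$ on their Banach space for $\Rea s>1/2$ and the meromorphic continuation follow formally from the corresponding statements for $\mc L_s$ in Theorem~A, since nuclearity passes to restrictions to closed invariant subspaces and the meromorphic continuation of a direct sum is the direct sum of continuations. A minor refinement is required for the bound on the orders of the poles at $s=(1-k)/2$: the pole order drops from $2$ to $1$ because the two contributions that jointly produce the order-$2$ pole in $\mc L_s$ get distributed, one into each of $\mc L_s^\pm$, after the $J$-symmetrization. This requires a separate inspection of the residue structure of each branch of the inverse return map, but is in any case bounded above by the pole order of $\mc L_s$. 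Finally, the dynamical zeta function interpretation of $Z_\pm$ follows, for $\Rea s$ sufficiently large, from the standard identity
\[
 \det(1-\mc L_s^\pm) \;=\; \exp\Bigl(-\sum_{n\geq 1} \tfrac{1}{n}\tr\bigl((\mc L_s^\pm)^n\bigr)\Bigr),
\]
by unfolding the traces into sums over periodic $n$-orbits of the billiard return map, weighted by $\pm 1$ according to the parity of the number of reflections, and reorganizing the sum over primitive periodic orbits of the billiard flow on $\wt X$.

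The hard part will be the explicit construction of the billiard cross section and of the concrete formulas for $\mc L_s^\pm$: one has to bookkeep carefully the reflection symmetry on the boundary of the fundamental domain of $\wt\Gamma$ and verify that the induced return map, together with the chosen weights, is fully compatible with the $J$-action on the geodesic model of Theorem~A. Once these combinatorial and geometric details are pinned down, the factorization, the nuclearity and meromorphic continuation with the claimed pole orders, and the length-spectrum interpretation will follow by essentially formal manipulations from the results of Sections~\ref{sec:systems} and \ref{sec:domain}.
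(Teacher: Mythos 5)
Your proposal is correct and follows essentially the same route as the paper: the paper realizes your even/odd splitting concretely as a conjugation of the four-by-four matrix transfer operator $\mc L_{I,s}$ by an explicit involution $\mc P_s$ built from $\tau_s(J)$, yielding a block-diagonal form with blocks $\mc L_{I^J,s}^+$ and a conjugate of $\mc L_{I^J,s}^-$, from which nuclearity, the continuation with pole order at most $1$ (the rank-$2$ residue of $\mc L_{I,s}$ splitting into two rank-$1$ pieces), and the determinant factorization all follow from Theorems~\ref{nuclear} and \ref{fredholm}. The length-spectrum expression for $Z_\pm$ is likewise obtained, as you indicate, by unfolding $\Tr(\mc L_{I^J,s}^\pm)^n$ over periodic orbits of the billiard return map weighted by the reflection parity $\det a$ and regrouping over primitive hyperbolic conjugacy classes in $\wt\Gamma$.
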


Our constructions of the discretizations always involve a first discretization whose associated discrete dynamical systems are non-uniformly expanding with only finitely many preimages of any given point. For this article, these systems are of auxiliary nature, but we expect that the eigenfunctions of their associated transfer operators are intimately related to the resonances of the Laplacian on $X$ resp.\@ $\wt X$, and comment on this in Section~\ref{sec:outlook}. 

This article is part of a program to study spectral properties of Riemannian locally symmetric spaces and orbifolds with transfer operator techniques, see, e.g.,  \cite{Pohl_diss, Hilgert_Pohl, Pohl_Symdyn2d, Moeller_Pohl, Pohl_mcf_Gamma0p, Pohl_mcf_general,   Mayer_thermo, Chang_Mayer_eigen, Chang_Mayer_extension, Efrat_spectral, Deitmar_Hilgert,  Lewis, Lewis_Zagier, BLZ_part2, Pohl_spectral_hecke} and the references already given above. In particular, we expect that the transfer operators obtained here will make possible an investigation of resonances similiar to that in \cite{Borthwick_numerical, Barkhofen_etal, Weich} for Fuchsian Schottky groups.

\section{Preliminaries}\label{sec:prelims}

\subsection{Hyperbolic geometry}
As model for the hyperbolic plane, we use the upper half plane
\[
 \h \sceq \{ z = x + iy \in\C\mid y > 0\}
\]
endowed with the Riemannian metric given by the line element $ds^2 = y^{-2}(dx^2 + dy^2)$. We identify its geodesic boundary with $P^1(\R) \cong \R\cup\{\infty\}$. Let 
\[
 J \sceq \bmat{-1}{0}{0}{1} \in \PGL_2(\R).
\]
The group of Riemannian isometries on $\h$ is isomorphic to 
\[
 \PGL_2(\R) = \PSL_2(\R) \cup J \PSL_2(\R),
\]
whose action on $\h$ extends continuously to $P^1(\R)$. The subgroup $\PSL_2(\R)$ of orientation-preserving isometries acts by fractional linear transformations. Thus, for $\textbmat{a}{b}{c}{d}\in \PSL_2(\R)$ and $z\in\h\cup\R$, we have
\[
\bmat{a}{b}{c}{d}.z = 
\begin{cases}
\frac{az+b}{cz+d} & \text{for $cz+d\not=0$}
\\
\infty & \text{for $cz+d=0$} 
\end{cases}
\quad\text{and}\quad
\bmat{a}{b}{c}{d}.\infty = 
\begin{cases}
\frac{a}{c} & \text{for $c\not=0$}
\\
\infty & \text{for $c=0$.}
\end{cases}
\]
The action of $J$ is given by
\[
 J.z = -\overline z \quad\text{and}\quad J.\infty = \infty.
\]
The action of $\PGL_2(\R)$ obviously induces an action on the unit tangent bundle $S\h$ of $\h$.

\subsection{Hecke triangle groups}\label{sec:Heckeprelims}
The Hecke triangle group $\Gamma_\lambda$ with parameter $\lambda>0$ is the subgroup of $\PSL_2(\R)$ generated by the two elements
\[
 S = \bmat{0}{1}{-1}{0} \quad\text{and}\quad T_\lambda = \bmat{1}{\lambda}{0}{1}.
\]
The group $\Gamma_\lambda$ is Fuchsian (i.e., discrete) if and only if $\lambda\geq 2$ or $\lambda = 2\cos\tfrac{\pi}{q}$ with $q\in\N_{\geq 3}$. A fundamental domain, indicated in Figure~\ref{funddoms}, for a Fuchsian Hecke triangle group $\Gamma_\lambda$ is given by 
\[
 \mc F_\lambda \sceq \left\{ z\in\h\ \left\vert\ |z|>1,\ |\Rea z|< \frac{\lambda}2 \right.\right\},
\]
of which the vertical sides $\{ \Rea z = -\lambda/2\}$ and $\{ \Rea z = \lambda/2\}$ are paired by $T_\lambda$, and the arc-sides $\{ |z|=1, \Rea z \leq 0\}$ and $\{ |z|=1, \Rea z\geq 0\}$ are paired by $S$. 

\begin{figure}[h]
\begin{center}
\includegraphics{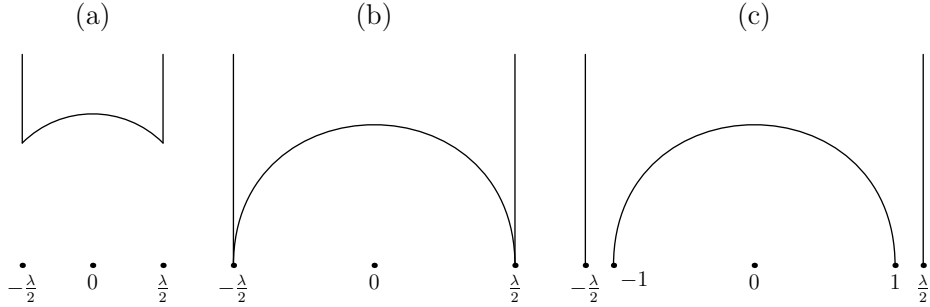}
\end{center}
\caption{Fundamental domains for $\Gamma_\lambda$ if (a) $\lambda<2$, (b) $\lambda=2$, and (c) $\lambda>2$.}
\label{funddoms}
\end{figure}

For $\lambda=2\cos\tfrac{\pi}{q}$ ($q\in\N_{\geq 3}$) and for $\lambda=2$, the discrete Hecke triangle groups are (non-uniform) Fuchsian lattices. The Hecke triangle group $\Gamma_1$ is the modular group, the lattice $\Gamma_2$ is isomorphic to the projective version of $\Gamma_0(2)$. Thermodynamic formalism approaches to the Selberg zeta functions for these cofinite Hecke triangle groups have been established in \cite{Moeller_Pohl}. In this article, we consider the non-cofinite Hecke triangle groups $\Gamma_\lambda$, $\lambda>2$. The associated infinite-area orbifolds, the Hecke triangle surfaces,
\[
 X_\lambda \sceq \Gamma_\lambda\backslash\h
\]
have one funnel, one cusp and one elliptic point. The funnel is represented by the subset $[-\lambda/2, -1]\cup [1,\lambda/2]$ of $\R$. The cusp is represented by $\infty$ with stabilizer group $\langle T_\lambda\rangle$, and the elliptic point is represented by $i$ with stabilizer group $\{\id, S\}$. We use
\[
 SX_\lambda = \Gamma_\lambda \backslash S\h
\]
to denote the unit tangent bundle of $X_\lambda$. We parametrize all geodesics on $\h$ and on $X_\lambda$ by arc length.

From now on, we shall omit all subscripts $\lambda$. 

\subsection{Selberg zeta function}\label{sec:selberg}

Let $\delta$ denote the Hausdorff dimension of the limit set of $\Gamma$. For $\Rea s > \delta$, the Selberg zeta function of $X$ is defined by
\begin{equation}\label{formulaSelberg}
 Z(s) = \prod_{\ell\in \Primlength} \prod_{k=0}^\infty \left(1-e^{-(s+k)\ell}\right),
\end{equation}
where  $\Primlength$ denotes the primitive geodesic length spectrum of $X$ repeated according to multiplicities.
It is well-known that $Z$ is holomorphic and nonvanishing on $\Rea s > \delta$, that $\delta$ equals the exponent of convergence of the Poincar\'e series for $\Gamma$, that $\delta$ is a zero of $Z$ and $\delta(1-\delta)$ is the largest eigenvalue of the (positive definite) Laplace-Beltrami operator on $X$ \cite{Patterson, Sullivan}, and $\delta > 1/2$ \cite{Beardon_exponent1, Beardon_exponent2}.

\subsection{Dynamical systems and discretizations}\label{def:cross}

We call a subset $\wh C$ of $SX$ a cross section for the geodesic flow on $X$ if and only if the intersection between any geodesic and $\wh C$ is discrete in space and time, and each periodic geodesic intersects $\wh C$ (infinitely often).
Since the Selberg zeta function only depends on the periodic geodesics, we may use here this relaxed notion of cross section. A set of representatives for $\wh C$ is a subset $C'$ of $S\h$ such that the canonical quotient map $\pi\colon S\h \to SX$ induces a bijection $C'\to \wh C$.

For any $\wh v\in SX$ let $\wh\gamma_{\wh v}$ denote the geodesic on $X$ determined by 
\[
 \wh\gamma'_{\wh v}(0) = \wh v.
\]
The first return map of a cross section $\wh C$ is given by 
\[
 R\colon \wh C \to \wh C,\quad \wh v \mapsto \wh\gamma'_{\wh v}(t(\wh v)),
\]
whenever the first return time
\[
 t(\wh v) \sceq \min\{ t>0 \mid \wh\gamma'_{\wh v}(t) \in \wh C\}
\]
exists. 

Let $\wh{\mc V}$ denote the set of geodesics on $X$ which converge to the cusp or the funnel of $X$. Let $T\wh{\mc V}$ denote the set of unit tangent vectors to the elements in $\wh{\mc V}$, and set $T\mc V \sceq \pi^{-1}(T\wh{\mc V})$. For $v\in S\h$ let $\gamma_v$ denote the geodesic on $\h$ determined by 
\[
 \gamma'_v(0) = v.
\]
Let 
\[
 \bd \sceq \{ \gamma_v(\infty) \mid v\in T\mc V\} \quad \subseteq \R \cup \{\infty\}
\]
be the set of endpoints of the geodesics determined by the elements in $T\mc V$. The points in 
\[
 \big(P^1(\R) \times \bd\big) \cup \big(\bd \times P^1(\R)\big)
\]
are then precisely the endpoints of the geodesics in $\pi^{-1}(\wh{\mc V})$. For any subset $I$ of $\R$ we set
\[
 I_\st \sceq I \setminus \bd.
\]

The cross section $\wh C$ we will construct in Section~\ref{sec:systems} are not intersected at all by the geodesics in $\wh{\mc V}$, and by each other geodesic infinitely often both in backward and forward time. Thus, its first return map is defined everywhere. Moreover there exists a set of representatives $C'$ which decomposes into at most countably many sets $C'_\alpha$, $\alpha\in A$, each one consisting of a certain fractal set of unit tangent vectors whose base points form a vertical geodesic on $\h$ and all of who point into the same half space determined by this geodesic. On each $C'_\alpha$, the map $v\mapsto \gamma_v(\infty)$ is injective, and its image is of the form $I_{\alpha,\st}$ for some interval $I_\alpha$ in $\R$. 

Via the map $\tau\colon \wh C \to \R\times A$,
\[
 \tau(\wh v) \sceq (\gamma_v(\infty), \alpha) \qquad \text{for $v=\pi^{-1}(\wh v) \cap C' \in C'_\alpha$},
\]
the first return map $R\colon \wh C \to \wh C$ induces a discrete dynamical system $(D_\st, F)$ on 
\[
 D_\st = \bigcup_{\alpha\in A} I_{\alpha, \st}\times \{\alpha\}.
\]
The special structure of the sets $C'_\alpha$ implies that $F$ decomposes into countably many submaps (bijections) of the form 
\[
 \big( I_{\alpha,\st} \cap g^{-1}_{\alpha,\beta}. I_{\beta,\st}\big) \times \{\alpha\} \to I_{\beta,\st} \times \{\beta\},\quad (x,\alpha) \mapsto (g_{\alpha,\beta}.x, \beta),
\]
where $\alpha,\beta\in A$ and $g_{\alpha,\beta}$ is an element in $\Gamma$. For any $v\in C'_\alpha$ there is a first future intersection between $\gamma_v(\R_{>0})$ and $\Gamma.C'$, say on $g_{\alpha,\beta}.C'_\beta$, which completely determines these submaps. 

Each submap can be continued to an analytic map on the ``analytic hull'' of $I_{\alpha,\st} \cap g^{-1}_{\alpha,\beta}. I_{\beta,\st}$, that is the minimal interval $I$ in $\R$ such that $I_\st = I_{\alpha,\st} \cap g^{-1}_{\alpha,\beta}. I_{\beta,\st}$. We will continue to denote the arising piecewise analytic map by $F$.

Finally, we call a finite sequence $(x_k)_{k=1}^n$ in $D_\st$ (or the domain of a piecewise analytic extension) an $F$-periodic orbit of length $n$ if and only if $F(x_k) = x_{k+1}$ for $k=1,\ldots, n-1$, and $F(x_n) = x_1$. We consider two $F$-periodic orbits as equivalent if and only if they have the same length and are identical after some cyclic permutation.

\subsection{Transfer operators}

Given a discrete dynamical system $(D, F)$, where $D$ is a family of subsets of $\R$, $F$ is differentiable and each point has at most countably many preimages, the associated transfer operator $\mc L_{F, s}$ ($s\in\C$) is (formally) given by
\[
 \big(\mc L_{F,s} f\big)(x) \sceq \sum_{y\in F^{-1}(x)} |F'(y)|^{-s} f(y),
\]
acting on an appropriate space of functions $f\colon D\to\C$ (to be adapted to the system and applications under consideration).

For $s\in\C$, $g=\textbmat{a}{b}{c}{d}\in \PGL_2(\R)$ and $x\in\R$, we set
\[
 j_s(g,x) \sceq \big( |\det g|\cdot (cx+d)^{-2} \big)^s.
\]
For a function $f\colon V\to \R$ on some subset of $V$ of $P^1(\R)$, we define
\[
\tau_s(g^{-1})f(x) \sceq j_s(g,x) f(g.x),
\]
whenever this makes sense.

If the system $(D,F)$ decomposes into countably many submaps of the form 
\begin{equation}\label{submaps_abstract}
 D_\alpha \to F(D_\alpha),\quad x \mapsto g_\alpha.x \qquad (\alpha\in A)
\end{equation}
for some $g_\alpha\in \PGL_2(\R)$, then the (formal) transfer operator $\mc L_{F,s}$ becomes
\[
 \mc L_{F,s} f = \sum_{\alpha\in A} 1_{F(D_\alpha)} \cdot \tau_s(g_\alpha)\big(f\cdot 1_{D_\alpha}\big),
\]
where $1_E$ denotes the characteristic function of the set $E$.

\subsection{Thermodynamic formalism}\label{sec:thermo}

Given a sufficiently ``nice'' discrete dynamical system $(D,F)$ related to the geodesic flow on $X$, the proof that, for large $\Rea s$, the Fredholm determinant of its associated family of transfer operators $\mc L_{F,s}$ represents the Selberg zeta function of $X$ is by now standard (see \cite{Ruelle_formalism, Mayer_thermo}). In this section we briefly recall the structure of this proof. The main purpose of this article is then to construct discretizations with the requested properties. 

Let $\wh C$ be a cross section for the geodesic flow on $X$ with some set $C'$ of representatives. Suppose that $(\wh C, C')$ gives rise, in the way as explained in Section~\ref{def:cross}, to a discrete dynamical system $(D,F)$ such that 
\begin{enumerate}[(i)]
\item it decomposes into at most countably many submaps of the form \eqref{submaps_abstract} with $g_\alpha\in\Gamma$,
\item\label{condbij} the equivalence classes of $F$-periodic orbits are in bijection with the periodic geodesics on $X$, and
\item it is uniformly expanding, which here translates to the requirement that the transfer operators $\mc L_{F,s}$ are nuclear of order $0$ for sufficiently large $\Rea s$.
\end{enumerate}

We use the fact that periodic geodesics on $X$ are in bijection with the conjugacy classes of the hyperbolic elements in $\Gamma$. For a hyperbolic element $a\in\Gamma$, its norm $N(a)$ is the square of the eigenvalue of $a$ with larger absolute value. The periodic geodesic $\wh\gamma$ on $X$ which corresponds to the conjugacy class of $a$ has length 
\[
\ell(\wh \gamma) = \log N(a)
\]
and is represented by the geodesic $\gamma$ on $\h$ whose future resp.\@ past endpoint is the attracting resp.\@ repelling fixed point of $a$: 
\[
 \gamma(+\infty) = \lim_{n\to\infty} a^n.i \quad\text{and}\quad \gamma(-\infty) = \lim_{n\to\infty} a^{-n}.i.
\]
We consider the Smale-Ruelle zeta function for $X$, which, for $\Rea s >\delta$, is given by
\[
 \zeta_{\text{SR}}(s) \sceq \prod_{\ell\in\Primlength} \left(1- e^{-s\ell}\right)^{-1}.
\]
For $n\in\N$ let 
\[
 \Fix R^n \sceq \left\{ \wh v \in \wh C \ \left\vert\  R^n(\wh v) = \wh v \vphantom{ \wh C}  \right.\right\}
\]
be the set of fixed points of $R^n$. The $n$-th dynamical partition function is given by
\[
 Z_n(R,s) \sceq \sum_{\wh v\in\Fix R^n} \exp\left( -s \sum_{k=0}^{n-1} t\big(R^k(\wh v)\big)\right).
\]
Then we have
\[
 \zeta_{\text{SR}}(s) = \exp\left( \sum_{n\in\N} \frac1n Z_n(R,s) \right).
\]
If $(x_k)_{k=1}^n$ is an $F$-periodic orbit, let $(a_k)_{k=1}^n$ denote the associated sequence of acting group elements in the submaps \eqref{submaps_abstract}, that is $a_k.x_k=x_{k+1}$ for $k=1,\ldots, n-1$, and $a_n.x_n=x_1$. Further let $\Per_n$ denote the set of arising sequences $(a_k)_{k=1}^n$, and $P_n$ the elements $a_n\cdots a_1$. Let $\wh v \in \Fix R^n$, and set $x_1 \sceq \tau(\wh v)$. Then $x_1$ starts an $F$-periodic orbit of length $n$ with associated sequence $(a_k)_{k=1}^n$. We set
\[
 a(\wh v) \sceq a_n\cdots a_1.
\]
By \eqref{condbij}, $P_n$ consists only of hyperbolic elements, and 
\[
 \Fix R^n \to P_n,\quad \wh v \mapsto a(\wh v),
\]
is a bijection with 
\[
 \sum_{k=0}^{n-1} t\big(R^k(\wh v)\big) = \log N\big(a(\wh v)\big).
\]
Therefore,
\[
 Z_n(R,s) = \sum_{a\in P_n} N(a)^{-s}.
\]
If $\tau_s(a)$ and $\mc L_{F,s}$ are considered to act on an appropriate Banach space of holomorphic functions (see Section~\ref{sec:domain}), we have
\[
 \Tr \tau_s(a) = \frac{N(a)^{-s}}{1-N(a)^{-1}} \quad\text{and}\quad \Tr \mc L_{F,s}^n = \sum_{a\in P_n} \Tr \tau_s(a).
\]
Then
\[
 Z_n(R,s) = \Tr \mc L_{F,s}^n - \Tr \mc L_{F,s+1}^n.
\]
For the Selberg zeta function it now follows
\begin{align*}
Z(s) & = \prod_{k=0}^\infty \zeta_{\text{SR}}(s+k)^{-1}
\\
& = \prod_{k=0}^\infty \exp\left( \sum_{n\in\N} \frac1n \left( \Tr \mc L_{F,s+k}^n - \Tr \mc L_{F,s+k+1}^n\right)\right)
\\
& = \exp\left( -\sum_{n\in\N} \frac1n \Tr \mc L_{F,s}^n \right) \cdot \lim_{k\to\infty} \exp\left( \sum_{n\in\N} \frac1n \Tr ßmc L_{F,s+k}^n\right) 
\\
&=  \exp\left( -\sum_{n\in\N} \frac1n \Tr \mc L_{F,s}^n \right)
\\
& = \det\left( 1 - \mc L_{F,s} \right).
\end{align*}

\section{Discrete dynamical systems and transfer operators}\label{sec:systems}

The construction of the discrete dynamical system for the thermodynamic formalism approach to the Selberg zeta function of $X$ will be done in three steps. In Section~\ref{sec:firstcross} below we recall the cross section $\wh C_P$, a set of representatives $C'_P$ and the induced discrete dynamical system $(D_P,F_P)$ with finitely many submaps from \cite{Pohl_Symdyn2d}. This system has the advantage that the equivalence classes of $F_P$-periodic orbits are already known to be in bijection with the conjugacy classes of hyperbolic elements in $\Gamma$, and that it is eventually expanding. However, it is not uniformly expanding due to the following three reasons:
\begin{itemize}
\item the appearance of the action by identity in the submaps, which causes a, for our purposes, overly refined cross section,
\item the appearance of the action by elliptic elements in the submaps, which causes a locally contracting behavior, and
\item the way of appearance of the action by parabolic elements in the submaps, which causes a locally non-expanding non-contracting behavior.
\end{itemize}

To overcome the first two issues we reduce $\wh C_P$ and $C'_P$ to certain minimal subsets $\wh C_R$ and $C'_R$ which preserve the essential properties of $\wh C_P$ and $C'_P$. The arising discrete dynamical system $(D_R, F_R)$ still decomposes into only finitely many submaps and hence is only eventually expanding due to the third issue mentioned above. We will comment in Section~\ref{sec:outlook} on a conjectural application of these two systems. 

To eliminate the third issue we apply an induction procedure on $\wh C_R$ and $C'_R$ to construct a cross section $\wh C_I$ with set of representatives $C'_I$ such that the induced discrete dynamical system $(D_I, F_I)$ is uniformly expanding and still enjoys the property that the equivalence classes of its periodic orbits are in bijection with the conjugacy classes of the hyperbolic elements.

\subsection{First symbolic dynamics for the geodesic flow}\label{sec:firstcross}

We let $\base(v) \in \h$  denote the base point of a unit tangent vector $v\in S\h$. The set of representatives $C'_P$ for the cross section $\wh C_P$ from \cite{Pohl_Symdyn2d} decomposes into the disjoint subsets
\begin{align*}
C'_a & \sceq \{ v\in S\h \mid \Rea\base(v) = -1,\ \gamma_v(-\infty) \in \R_\st,\  \gamma_v(\infty) \in (-1,\infty)_\st\},
\\
C'_b & \sceq \{ v\in S\h \mid \Rea\base(v) = 1,\ \gamma_v(-\infty) \in \R_\st,\  \gamma_v(\infty) \in (-\infty, 1)_\st\},
\\
C'_c & \sceq \{ v\in S\h \mid \Rea\base(v) = -1,\ \gamma_v(-\infty) \in \R_\st,\  \gamma_v(\infty) \in (-\infty, -1)_\st\},
\\
C'_d & \sceq \{ v\in S\h \mid \Rea\base(v) =  1,\ \gamma_v(-\infty) \in \R_\st,\  \gamma_v(\infty) \in (1,\infty)_\st\},
\\
C'_e & \sceq \left\{ v\in S\h \,\left\vert\ \Rea\base(v) = -\tfrac{\lambda}{2},\ \gamma_v(-\infty) \in \R_\st,\  \gamma_v(\infty) \in \left(-\tfrac{\lambda}{2},\infty\right)_\st \right.\right\},
\\
C'_f & \sceq \left\{ v\in S\h \,\left\vert\  \Rea\base(v) = \tfrac{\lambda}{2},\ \gamma_v(-\infty) \in \R_\st,\  \gamma_v(\infty)\in \left(-\infty, \tfrac{\lambda}{2}\right)_\st\right.\right\}, \text{ and}
\\
C'_g & \sceq \{ v\in S\h \mid  \Rea\base(v) = 0,\ \gamma_v(-\infty) \in \R_\st,\  \gamma_v(\infty)\in (0,\infty)_\st\}.
\end{align*}

This set of representatives and their relevant $\Gamma$-translates for the determination of the induced discrete dynamical system are indicated in Figure~\ref{setrep_original}. For detailed proofs we refer to \cite{Pohl_Symdyn2d}.

\begin{figure}[h]
\begin{center}
\includegraphics{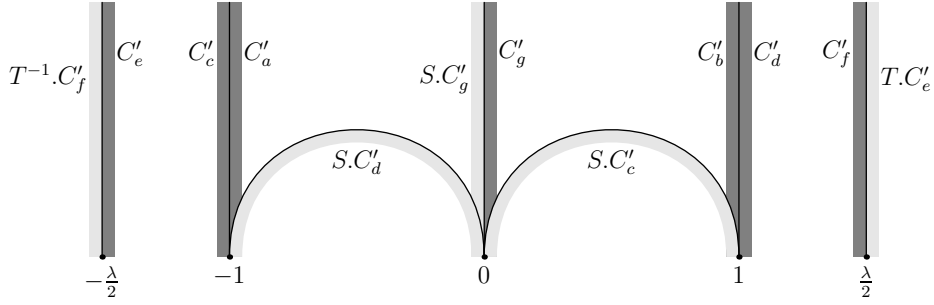}
\end{center}
\caption{Set of representatives $C'_P$ and future intersections.}
\label{setrep_original}
\end{figure}

The associated discrete dynamical system $(D_P,F_P)$ is defined on 
\begin{align*}
D_P &\sceq  \big((-1,\infty)_\st\times\{a\}\big)\ \cup\ \big((-\infty,1)_\st\times\{b\}\big)\ \cup\ \big((-\infty,-1)_\st\times\{c\}\big)
\\
& \quad \ \cup\ \big((1,\infty)_\st\times\{d\}\big)\ \cup\ \big((-\lambda/2,\infty)_\st\times\{e\}\big) 
\\
& \quad \ \cup\ \big((-\infty,\lambda/2)_\st\times\{f\}\big)\ \cup\ \big((0,\infty)_\st\times\{g\}\big)
\end{align*}
and given by the submaps
\begin{align*}
(-1,0)_\st\times\{a\} &\to (1,\infty)_\st\times\{d\}, & (x,a) &\mapsto (S.x,d)
\\
(0,\infty)_\st\times\{a\} &\to (0,\infty)_\st\times\{g\}, & (x,a) &\mapsto (x,g)
\\
(-\infty, 0)_\st\times\{b\} &\to (0,\infty)_\st\times\{g\}, & (x,b) &\mapsto (S.x,b)
\\
(0,1)_\st\times\{b\} &\to (-\infty,-1)_\st\times\{c\}, & (x,b) &\mapsto (S.x,c)
\\
\left(-\infty, -\tfrac{\lambda}{2}\right)_\st\times\{c\} &\to \left(-\infty, \tfrac{\lambda}{2}\right)_\st\times\{f\}, & (x,c) &\mapsto (T.x,f)
\\
\left(\tfrac{\lambda}{2},\infty\right)_\st\times\{d\} &\to \left(-\tfrac{\lambda}2,\infty\right)_\st\times\{e\}, & (x,d) &\mapsto (T^{-1}.x, e)
\\
(-1,\infty)_\st\times\{e\} &\to (-1,\infty)_\st\times\{a\}, & (x,e) & \mapsto (x,a)
\\
(-\infty, 1)_\st\times\{f\} &\to (-\infty, 1)_\st \times\{b\}, & (x,f) &\mapsto (x,b)
\\
(0,1)_\st\times\{g\} &\to (-\infty, -1)_\st\times\{c\}, & (x,g) &\mapsto (S.x, c)
\\
(1, \infty)_\st\times\{g\} &\to (1,\infty)_\st\times\{d\}, & (x,g) &\mapsto (x,d).
\end{align*}
Note that $(-\lambda/2,-1)_\st = \emptyset = (1,\lambda/2)_\st$, and thus
\[
 \left(-\infty,-\tfrac{\lambda}2\right)_\st = (-\infty, -1)_\st, \quad \left(\tfrac{\lambda}2,\infty\right)_\st = (1,\lambda)_\st
\]
as well as 
\[
 (-1,\infty)_\st = \left(-\tfrac{\lambda}2,\infty\right)_\st, \quad (-\infty, 1)_\st = \left(-\infty, \tfrac{\lambda}2\right)_\st.
\]
Hence $F_P$ is indeed defined on all of $D_P$.

The following proposition is essentially induced by the fact that the boundary points of the analytic hulls of the domains of the submaps of $(D_P,F_P)$ are not fixed by hyperbolic elements in $\Gamma$.

\begin{prop}[\cite{Pohl_Symdyn2d}]\label{bij_first}
The equivalence classes of the $F_P$-periodic orbits are in bijection with the conjugacy classes of the hyperbolic elements in $\Gamma$.
\end{prop}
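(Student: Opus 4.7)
The plan is to construct the claimed bijection as the composition
\[
 \{\text{hyperbolic conjugacy classes in }\Gamma\} \longleftrightarrow \{\text{periodic geodesics on }X\} \longleftrightarrow \{R\text{-orbits on }\wh C_P\} \longleftrightarrow \{F_P\text{-orbits on }D_P\},
\]
where the first arrow is the classical assignment sending a hyperbolic $a$ to the projection of its axis on $\h$ to $X$ (well-defined up to conjugation), and the last arrow is induced by $\tau$, which restricts to a bijection $\wh C_P\cap C'_\alpha \to I_{\alpha,\st}\times\{\alpha\}$ and intertwines $R$ with $F_P$ by construction of Section~\ref{def:cross}. What needs real work is the middle bijection, and showing that composition respects the equivalence relations (cyclic permutation on the orbit side, conjugation on the group side).

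For the middle correspondence, in one direction the cross section property of $\wh C_P$ forces each periodic geodesic $\wh\gamma$ on $X$ to hit $\wh C_P$ in a discrete-in-time set, hence only finitely many times per period, producing a well-defined $R$-orbit up to cyclic shift. In the other direction I would start from an $F_P$-periodic orbit $(x_k,\alpha_k)_{k=1}^n$, extract the canonical sequence $a_1,\ldots,a_n\in\Gamma$ from the explicit submap list, and analyse the product $a:=a_n\cdots a_1$, which fixes $x_1\in\R_\st$. I would then rule out the non-hyperbolic alternatives one by one: elliptic elements of $\PSL_2(\R)$ have no real fixed points; parabolic fixed points are $\Gamma$-translates of $\infty$ and thus lie in $\bd$, contradicting $x_1\in\R_\st$; and $a=\id$ is excluded by the boundary input below. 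Cyclic permutation of $(a_k)$ replaces $a$ by a conjugate, so the class $[a]$ depends only on the equivalence class of the orbit. The two constructions are mutual inverses by inspection: the axis of $a$ reconstructs $\wh\gamma$, and following $\wh\gamma$ through $\wh C_P$ recovers $(x_k,\alpha_k)$ up to cyclic shift.

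The principal obstacle, and the technical statement highlighted in the hint, is the boundary-point input: no endpoint of the analytic hull of any submap domain $D_\alpha$ is fixed by a hyperbolic element of $\Gamma$. This is needed twice, namely to force $a\neq\id$ in the inverse construction (an identity product is only compatible with $x_1$ sitting on a submap boundary), and to ensure that every hyperbolic axis meets $\wh C_P$ in the interior of some $C'_\alpha$ so that no conjugacy class is missed or double-counted. I would verify it by a finite check on the explicit submap table: the relevant boundary points are $\Gamma$-translates of $\{-\lambda/2,-1,0,1,\lambda/2,\infty\}$, and each of these is either a cusp (a $\Gamma$-translate of $\infty$, hence a parabolic fixed point distinct from any hyperbolic one) or a funnel endpoint lying in the ordinary set of $\Gamma$, whereas hyperbolic fixed points belong to the limit set. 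With this geometric input, all three bijections in the chain are established and the proposition follows.
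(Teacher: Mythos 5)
The paper itself offers no proof of this proposition: it is imported wholesale from \cite{Pohl_Symdyn2d}, and the only guidance given is the sentence immediately preceding the statement, namely that the result is ``essentially induced by'' the fact that the boundary points of the analytic hulls of the submap domains are not fixed by hyperbolic elements of $\Gamma$. Your outline is organized around exactly that fact and around the same chain of correspondences (hyperbolic conjugacy classes $\leftrightarrow$ periodic geodesics $\leftrightarrow$ $R$-orbits on $\wh C_P$ $\leftrightarrow$ $F_P$-orbits) that Section~\ref{sec:thermo} of the paper also relies on, so the architecture matches what the paper intends. Your verification of the key boundary fact is also correct: $0$ and $\infty$ are cusps, hence parabolic but not hyperbolic fixed points, while $\pm 1$ and $\pm\lambda/2$ lie on the ideal boundary circle of the funnel and hence in the ordinary set, whereas hyperbolic fixed points lie in the limit set.

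One sub-step does not work as written: your exclusion of $a=\id$. If the product $a=a_n\cdots a_1$ were the identity, it would fix every point of $\R$, so the relation $a.x_1=x_1$ places no constraint on $x_1$ at all; in particular it does not force $x_1$ onto a submap boundary, and the boundary-point lemma gives you nothing here. (Note also that the identity genuinely occurs as an acting element in several submaps of $(D_P,F_P)$, so this case cannot be dismissed as vacuous.) The repair is already contained in your middle bijection: an $F_P$-periodic orbit of length $n$ corresponds to a vector $\wh v\in\wh C_P$ with $R^n(\wh v)=\wh v$ and strictly positive total return time $t$, so the product $a$ satisfies $a.\gamma_v'(0)=\gamma_v'(t)$ in $S\h$ with $t>0$; if $a$ were the identity, a geodesic of $\h$ would return to the same unit tangent vector after positive time, which is impossible. (Alternatively, for $\lambda>2$ one can argue combinatorially in the free product $\langle S\rangle\ast\langle T\rangle$ that the admissible words arising from periodic label cycles never reduce to the empty word.) With that substitution your argument goes through and coincides in substance with the one the paper points to.
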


\subsection{Reduction of the symbolic dynamics for the geodesic flow and associated transfer operator family}

Figure~\ref{setrep_original} and the discrete dynamical system $(D_P, F_P)$ show immediately that each periodic geodesic on $X$ which intersects  e.g.\@ $\pi(C'_e)$ also intersects $\pi(C'_a)$. Therefore, already a subset of $\wh C_P$ serves as a cross section. In the following lemma we provide a maximally reduced sub-cross section of $\wh C_P$.

\begin{lemma}
The subset 
\[
 \wh C_R \sceq \pi(C'_a \cup C'_b)
\]
of $\wh C_P$ is a cross section for the geodesic flow on $X$ with $C'_R \sceq C'_a\cup C'_b$ as set of representatives.
\end{lemma}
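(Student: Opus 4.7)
The plan is to leverage the fact, already established in \cite{Pohl_Symdyn2d} and recalled as the basis for Proposition~\ref{bij_first}, that $\wh C_P = \pi(C'_P)$ is a cross section with $C'_P$ as set of representatives. Since $C'_R = C'_a \cup C'_b$ is a subset of $C'_P$ and $\wh C_R = \pi(C'_R)$ is the corresponding subset of $\wh C_P$, two of the three defining properties will be inherited almost for free, and only the ``every periodic geodesic intersects $\wh C_R$ infinitely often'' part will require a short combinatorial argument using the submaps of $(D_P,F_P)$.

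First I would dispose of the injectivity of $\pi$ restricted to $C'_R$. Because $C'_R \subseteq C'_P$ and $\pi|_{C'_P}$ is by assumption a bijection onto $\wh C_P$, the restriction $\pi|_{C'_R}$ is automatically injective; surjectivity onto $\wh C_R$ is just the definition $\wh C_R \sceq \pi(C'_R)$. Similarly, the discreteness in space and time of the intersection of any geodesic on $X$ with $\wh C_R$ is inherited from the analogous property of $\wh C_P$, since $\wh C_R \subseteq \wh C_P$.

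The essential step is then to show that every periodic geodesic $\wh\gamma$ on $X$ intersects $\wh C_R$ infinitely often. Since $\wh C_P$ is a cross section, $\wh\gamma$ hits $\wh C_P$ infinitely often, and via the discretization $\tau\colon \wh C_P \to D_P$ the sequence of successive intersections corresponds to an $F_P$-periodic orbit. It therefore suffices to prove that every $F_P$-periodic orbit visits the subset indexed by $\{a,b\}$ at least once (hence infinitely often by periodicity). Reading off the list of submaps, the transitions among indices form the directed graph
\[
 a \to d,\ a \to g,\quad b \to g,\ b \to c,\quad c \to f,\quad d \to e,\quad e \to a,\quad f \to b,\quad g \to c,\ g \to d.
\]
The sub-digraph obtained by deleting $a$ and $b$ has edges $c\to f$, $d\to e$, $g\to c$, $g\to d$ only, i.e.\ it is acyclic (its vertices $e,f$ are sinks there). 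Hence no $F_P$-periodic orbit can be entirely supported on $\{c,d,e,f,g\}$, which is exactly the claim.

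The main (and only) obstacle is this last combinatorial verification; once it is in place the lemma follows by combining it with the cross section property of $\wh C_P$. As a byproduct, the argument also yields a crude upper bound of three on the number of $F_P$-steps between consecutive visits to $\{a,b\}$, which will be useful later when deriving $(D_R,F_R)$ from $(D_P,F_P)$ by collapsing the intermediate steps.
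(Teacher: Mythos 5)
Your proposal is correct and follows essentially the same route as the paper, which simply says that inspecting the discrete dynamical system $(D_P,F_P)$ (or the figures of iterated future intersections) shows every periodic geodesic meets $C'_a\cup C'_b$; you merely make that inspection explicit via the transition digraph on $\{a,\dots,g\}$ and the acyclicity of its restriction to $\{c,d,e,f,g\}$, which is a welcome level of detail. (Only your side remark is slightly off: the longest excursion away from $\{a,b\}$, e.g.\ $a\to g\to c\to f\to b$, takes four $F_P$-steps, with three intermediate symbols.)
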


\begin{proof}
By inspecting the discrete dynamical system $(D_P,F_P)$ or by considering the iterated future intersections as indicated in Figure~\ref{iterated1} and \ref{iterated2}, one sees immediately that each periodic geodesic on $X$ lifts to a geodesic on $\h$ which intersects $C'_a \cup C'_b$.
\begin{figure}[h]
\begin{center}
\includegraphics{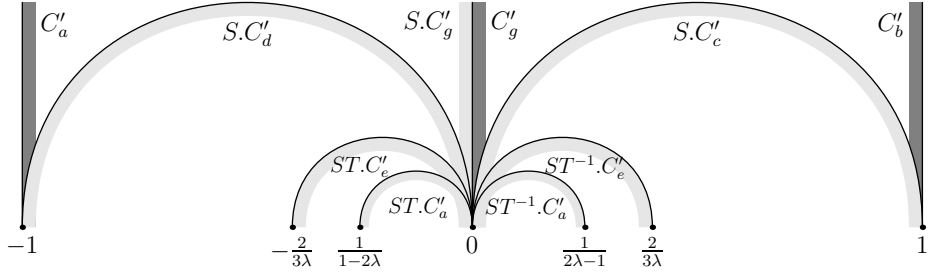}
\end{center}
\caption{Iterated future intersections: the part about $0$.}
\label{iterated1}
\end{figure}
\begin{figure}[h]
\begin{center}
\includegraphics{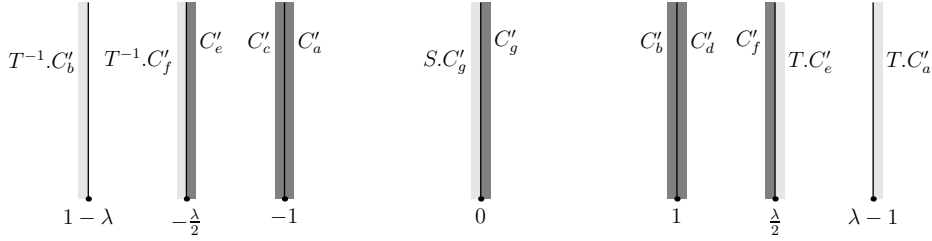}
\end{center}
\caption{Iterated future intersections: the part towards $\infty$.}
\label{iterated2}
\end{figure}

\end{proof}

The discrete dynamical system $(D_R,F_R)$ induced by $\wh C_R$ and  its set of representatives $C'_R$ is defined on 
\[
 D_R \sceq \big( (-1,\infty)_\st\times\{a\}\big) \ \cup\ \big( (-\infty,1)_\st\times\{b\}\big)
\]
and decomposes into the submaps
\begin{align*}
(-1,0)_\st\times\{a\} &\to (-1,\infty)_\st\times\{a\}, & (x,a) &\mapsto (T^{-1}S.x, a),
\\
(0,1)_\st\times\{a\} & \to (-\infty,1)_\st\times\{b\}, & (x,a)& \mapsto (TS.x, b),
\\
(-1+\lambda,\infty)_\st\times\{a\} & \to (-1,\infty)_\st\times\{a\}, & (x,a) &\mapsto (T^{-1}.x, a),
\\
(-\infty,1-\lambda)_\st\times\{b\} & \to (-\infty,1)_\st\times\{b\}, & (x,b) &\mapsto (T.x,b),
\\
(-1,0)_\st\times\{b\} & \to (-1,\infty)_\st\times\{a\}, & (x,b) &\mapsto (T^{-1}S.x,a),
\\
(0,1)_\st\times\{b\} & \to (-\infty, 1)_\st\times\{b\}, &(x,b) &\mapsto (TS.x,b).
\end{align*}
We remark that
\[
 (0,-1+\lambda)_\st = (0,1)_\st \quad\text{and}\quad (1-\lambda,0)_\st = (-1,0)_\st
\]
so that $F_R$ is defined on all of $D_R$. To simplify notation we set
\[
 g_1 \sceq T,\quad g_2\sceq T^{-1}S = \bmat{\lambda}{1}{-1}{0} \quad\text{and}\quad g_3\sceq TS = \bmat{\lambda}{-1}{1}{0}.
\]
The associated family of transfer operators 
\[
 \mc L_{R,s}\colon \Fct(D_R;\C) \to \Fct(D_R;\C)
\]
is then given by
\begin{align*}
\mc L_{R,s}f &= 1_{(-1,\infty)_\st\times\{a\}} \cdot \Big[ \tau_s(g_2) \big(f\cdot 1_{(-1,0)_\st\times\{a\}}\big) + \tau_s(g_1^{-1}) \big( f\cdot 1_{(-1+\lambda,\infty)_\st\times\{a\}}\big)
\\
& \qquad\qquad\qquad\qquad  + \tau_s(g_2) \big( f\cdot 1_{(0,1)_\st\times\{b\}}\big)\Big]
\\
& + 1_{(-\infty,1)_\st\times\{b\}} \cdot \Big[ \tau_s(g_3)\big(f\cdot 1_{(0,1)_\st\times\{a\}}\big) + \tau_s(g_1) \big(f\cdot 1_{(-\infty, 1-\lambda)_\st\times\{b\}}\big) 
\\
& \qquad\qquad\qquad\qquad  + \tau_s(g_3) \big(f\cdot 1_{(0,1)_\st\times\{b\}}\big)\Big].
\end{align*}
For $f\in\Fct(D_R;\C)$ we let
\[
 f_1 \sceq f \cdot 1_{ (-1,\infty)_\st\times\{a\}} \quad\text{and}\quad f_2 \sceq f \cdot 1_{ (-\infty,1)_\st\times\{b\}}.
\]
Thus $f=f_1 + f_2$. We may identify $f$ with the vector
\[
\begin{pmatrix}
f_1
\\
f_2
\end{pmatrix},
\]
and then $(-1,\infty)_\st\times\{a\}$ with $(-1,\infty)_\st$ and $(-\infty,1)_\st\times\{b\}$ with $(-\infty,1)_\st$. Then the transfer operator $\mc L_{R,s}$ has the matrix representation
\[
\mc L_{R,s} = 
\begin{pmatrix}
\tau_s(g_2) + \tau_s(g_1^{-1}) & \tau_s(g_2) 
\\
\tau_s(g_3) &  \tau_s(g_1) + \tau_s(g_3)
\end{pmatrix}.
\]

\subsection{Induction of the symbolic dynamics for the geodesic flow and associated transfer operator family}

Since the elements 
\[
 g_1 = \bmat{1}{\lambda}{0}{1} \quad\text{and}\quad g_1^{-1} = \bmat{1}{-\lambda}{0}{1}
\]
are parabolic and act on the diagonal of $\mc L_{R,s}$, this operator is not nuclear (on any nonzero domain of definition) and hence does not have a Fredholm determinant. 

To construct a closely related family of nuclear transfer operators, we ``accelerate'' the cross section $\wh C_R$ and the discrete dynamical system $(D_R,F_R)$ on these parabolic elements. Note that the elements $g_2$ and $g_3$ are hyperbolic. 

Let 
\begin{align*}
\mc N_R &\sceq \{ v\in C'_a \mid \gamma_v(-\infty) < -1-\lambda,\ \gamma_v(\infty)> -1+\lambda\}
\\
& \quad\quad\cup \{v\in C'_b \mid \gamma_v(\infty)<1-\lambda,\ \gamma_v(-\infty)>1+\lambda\}
\end{align*}
be the set of unit tangent vectors in $C'_R$ which cause consecutive applications of $T$ or $T^{-1}$ on the level of $(D_R, F_R)$. We set 
\[
\wh {\mc N}_R \sceq \pi(\mc N_R)
\]
as well as
\[
 \wh C_I \sceq \wh C_R\setminus \wh{\mc N}_R \quad\text{and}\quad C'_I \sceq C'_R\setminus \mc N_R.
\]

\begin{lemma}
The set $\wh C_I$ is a cross section for the geodesic flow on $X$ with $C'_I$ as set of representatives. The set $C'_I$ decomposes into the disjoint subsets
\[
 C'_{I,a} \sceq C'_a\setminus \mc N_R\quad\text{and}\quad  C'_{I,b}\sceq C'_b\setminus \mc N_R.
\]
\end{lemma}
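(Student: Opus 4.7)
The plan is to verify the three defining properties of a cross section for $\wh C_I$: discreteness of intersections with every geodesic in space and time, intersection with every periodic geodesic infinitely often, and the representative property of $C'_I$. Two of these come almost for free. Because $\wh C_I\subseteq\wh C_R$ and $\wh C_R$ is already a cross section, any geodesic meets $\wh C_I$ at a subset of its discrete intersection times with $\wh C_R$; and since $\pi\colon C'_R\to\wh C_R$ is a bijection, so is its restriction $\pi\colon C'_I\to\wh C_I$. The decomposition $C'_I=C'_{I,a}\sqcup C'_{I,b}$ is simply the restriction of $C'_R=C'_a\sqcup C'_b$ to $C'_R\setminus\mc N_R$.

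The core claim is that every periodic geodesic $\wh\gamma$ on $X$ still intersects $\wh C_I$ infinitely often. I would argue by contradiction. Let $(v_k)_{k=1}^n$ be a choice of representatives in $C'_R$ for the $R$-periodic orbit corresponding to $\wh\gamma$, and let $a_k\in\Gamma$ denote the acting group element of the submap going from $v_k$ to $v_{k+1}$, so that the product $a_n\cdots a_1$ is the hyperbolic element translating the lifted closed geodesic along itself. Assume for contradiction that every $v_k$ lies in $\mc N_R$. Reading off the submap list together with the definition of $\mc N_R$: for $v_k\in\mc N_R\cap C'_a$ the forward condition $\gamma_{v_k}(\infty)>-1+\lambda$ singles out the submap $(x,a)\mapsto(T^{-1}.x,a)$, forcing $a_k=T^{-1}$ and $v_{k+1}\in C'_a$; analogously for $v_k\in\mc N_R\cap C'_b$ one obtains $a_k=T$ and $v_{k+1}\in C'_b$. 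Induction along the cyclic orbit then shows that all $v_k$ sit in a single $\mc N_R\cap C'_\alpha$ with all $a_k$ equal to a common $T^{\pm 1}$. Consequently $a_n\cdots a_1=T^{\pm n}$ is parabolic, contradicting its hyperbolicity. Thus some $v_k$ lies in $C'_I$, which yields at least one intersection per period and, by periodicity, infinitely many.

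The main obstacle is the preliminary input that the product $a_n\cdots a_1$ is indeed hyperbolic for every $F_R$-periodic orbit. My preferred route is to transfer Proposition~\ref{bij_first} from $(D_P,F_P)$ to $(D_R,F_R)$: the reduction from $C'_P$ to $C'_R$ collapses only the identity and elliptic submaps, which does not alter the hyperbolic conjugacy class of the accumulated acting element around a closed orbit. If such a transfer turns out to be awkward in detail, one can instead invoke the general hyperbolic-geometry fact that the $\Gamma$-element identifying the two endpoints of a closed lifted geodesic on $\h$ is hyperbolic, since the $R$-periodic orbit assembles into precisely such a closed lifted geodesic.
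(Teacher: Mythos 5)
Your proposal is correct and fills in exactly the step the paper leaves to the reader: the published proof consists of the single assertion that each periodic geodesic still meets $\wh C_I$, and your contradiction argument --- if every intersection vector of a periodic geodesic lay in $\mc N_R$, the submap structure would force all acting elements to be a common $T^{\pm1}$, making the accumulated element $T^{\pm n}$ parabolic rather than hyperbolic --- is a valid justification of precisely that assertion. The remaining points (discreteness, the bijectivity of $\pi$ on $C'_I$, the decomposition into $C'_{I,a}$ and $C'_{I,b}$, and the hyperbolicity of the element translating a closed lifted geodesic) are handled correctly, so this matches the paper's intended approach.
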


\begin{proof}
One immediately checks that each periodic geodesic on $X$ intersects the set $\wh C_I$.
\end{proof}

The discrete dynamical system $(D_I, F_I)$ associated to the cross section $\wh C_I$ and its set of representatives $C'_I$ can now be deduced from $(D_R, F_R)$ by iterating $F_R$ on the submaps 
\begin{align*}
(-1+\lambda,\infty)_\st\times\{a\} & \to (-1,\infty)_\st\times\{a\}, & (x,a) &\mapsto (g_1^{-1}.x, a)
\intertext{and}
(-\infty,1-\lambda)_\st\times\{b\} & \to (-\infty,1)_\st\times\{b\}, & (x,b) &\mapsto (g_1.x,b)
\end{align*}
until these do not map into $(-1+\lambda,\infty)_\st\times\{a\}$ respectively $(-\infty, 1-\lambda)_\st\times\{b\}$.

The arising discrete dynamical system $(D_I, F_I)$ is defined on 
\[
 D_I\sceq \big((-1,\infty)_\st\times \{a\}\big) \cup \big( (-\infty, 1)_\st\times\{b\}\big)
\]
(which coincides with $D_R$) and given by the submaps
\begin{align*}
(-1,0)_\st\times\{a\} &\to (-1,\infty)_\st\times\{a\}, & (x,a) &\mapsto (g_2.x, a),
\\
(0,1)_\st\times\{a\} & \to (-\infty,1)_\st\times\{b\}, &(x,a)& \mapsto (g_3.x, b),
\\
(-1,0)_\st\times\{b\} & \to (-1,\infty)_\st\times\{a\}, & (x,b) &\mapsto (g_2.x,a),
\\
(0,1)_\st\times\{b\} & \to (-\infty, 1)_\st\times\{b\}, & (x,b) &\mapsto (g_3.x,b),
\end{align*}
and, for $n\in\N$,
\begin{align*}
(-1+n\lambda,-1+(n+1)\lambda)_\st\times\{a\} & \to (-1,-1+\lambda)_\st\times\{a\}, &(x,a)&\mapsto (g_1^{-n}.x, a),
\\
(1-(n+1)\lambda,1-n\lambda)_\st\times\{b\} & \to (1-\lambda,1)_\st\times\{b\}, &(x,b)&\mapsto (g_1^n.x,b).
\end{align*}

\begin{prop}\label{bij_final}
The equivalence classes of the $F_I$-periodic orbits are in bijection with the conjugacy classes of the hyperbolic elements in $\Gamma$.
\end{prop}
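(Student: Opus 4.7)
The plan is to establish the proposition by composing two natural bijections: equivalence classes of $F_I$-periodic orbits correspond to equivalence classes of $F_R$-periodic orbits, which in turn correspond to equivalence classes of $F_P$-periodic orbits. Combined with Proposition~\ref{bij_first}, this yields the desired bijection between $F_I$-periodic orbit equivalence classes and hyperbolic conjugacy classes in $\Gamma$.

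For the passage from $F_R$ to $F_I$, I would exploit that $(D_I, F_I)$ is obtained from $(D_R, F_R)$ by acceleration along the parabolic submaps $x\mapsto g_1^{\pm 1}.x$. Given an $F_R$-periodic orbit with acting elements $(a_k)_{k=1}^n \in \{g_1^{\pm 1}, g_2, g_3\}^n$, I would collapse each maximal consecutive block of $g_1$'s (respectively $g_1^{-1}$'s) into a single step $g_1^m$ (resp.\@ $g_1^{-m}$). Since the monodromy $a_n\cdots a_1$ is hyperbolic and any product of powers of $T$ is parabolic, at least one $a_k$ must lie in $\{g_2,g_3\}$; this breaks the cyclic sequence into well-defined blocks and ensures the collapsing procedure descends to equivalence classes. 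The inverse operation, expanding each $g_1^{\pm m}$ back into $m$ consecutive $g_1^{\pm 1}$ steps, is uniquely determined, giving the bijection.

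For the passage from $F_P$ to $F_R$, I would use that $\wh C_R \subseteq \wh C_P$ and that the list of $F_P$-submaps forces every orbit to visit $C'_a \cup C'_b$ within at most two iterations: from each of $C'_c, C'_d, C'_e, C'_f, C'_g$ the orbit is driven into $C'_a$ or $C'_b$, either directly or via one intermediate step. Inspection of the submap lists then shows that each submap of $F_R$ is the composition of a uniquely determined chain of $F_P$-submaps through the auxiliary components $C'_c,\ldots,C'_g$ (for instance, $(x,a)\mapsto(g_2.x,a)=(T^{-1}S.x,a)$ factors via the chain ending with the identity reductions $C'_e\to C'_a$). Consequently, restricting an $F_P$-periodic orbit to its intersections with $\wh C_R$ yields an $F_R$-periodic orbit, and conversely an $F_R$-periodic orbit lifts uniquely by reinserting the omitted intermediate points. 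Compatibility with cyclic permutation is immediate from the construction.

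The main obstacle is the careful bookkeeping required to verify that both reductions respect cyclic equivalence and that no collisions arise between distinct equivalence classes under the collapsing operation. This reduces to a finite, mechanical inspection of the submap lists of $F_P$, $F_R$, $F_I$ together with Figures~\ref{setrep_original}--\ref{iterated2}. Once completed, the composition of the two bijections and Proposition~\ref{bij_first} will yield the statement.
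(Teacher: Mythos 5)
Your proposal is correct and follows essentially the same route as the paper, which simply asserts that the relation between $F_P$, $F_R$ and $F_I$ induces bijections between the respective equivalence classes of periodic orbits and then invokes Proposition~\ref{bij_first}; you are filling in exactly the details the paper leaves implicit. One tiny slip: from $C'_g$ the orbit may need up to three $F_P$-steps (e.g.\ $g\to c\to f\to b$) to reach $\pi(C'_a\cup C'_b)$, not two, but since no cycle in the transition graph avoids $\{a,b\}$ this does not affect the argument.
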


\begin{proof}
The relation between $F_P$, $F_R$ and $F_I$ implies that the equivalence classes of their respective periodic orbits are in bijection. Then the claim follows from Proposition~\ref{bij_first}.
\end{proof}

Formally, the transfer operator with parameter $s\in\C$ associated to $(D_I, F_I)$ is given by
\begin{align*}
\mc L_{I,s}f & = 1_{(-1,\infty)_\st\times\{a\}} \cdot \Big[ \tau_s(g_2)\big(f\cdot 1_{(-1,0)_\st\times\{a\}}\big) + \tau_s(g_2)\big(f\cdot 1_{(-1,0)_\st\times\{b\}}\big)\Big]
\\
& + 1_{(-\infty,1)_\st\times\{b\}} \cdot \Big[ \tau_s(g_3)\big(f\cdot 1_{(0,1)_\st\times\{a\}}\big) + \tau_s(g_3)\big(f\cdot 1_{(0,1)_\st\times\{b\}}\big)\Big]
\\
& + 1_{(-1,-1+\lambda)_\st\times\{a\}} \cdot \sum_{n\in\N} \tau_s(g_1^{-n})\big(f\cdot 1_{(-1+\lambda,\infty)_\st\times\{a\}}\big)
\\
& + 1_{(1-\lambda,1)_\st\times\{b\}} \cdot \sum_{n\in\N}\tau_s(g_1^n)\big(f\cdot 1_{(-\infty, 1-\lambda)_\st\times\{b\}}\big). 
\end{align*}
We will provide a convenient domain of definition in Section~\ref{sec:domain} below. As preparation, we define
\begin{align*}
 D_{1,\st} \sceq (-1,1)_\st, \quad D_{2,\st} \sceq (-1+\lambda,\infty)_\st, \quad D_{3,\st} \sceq (-\infty, 1-\lambda)_\st.
\end{align*}
Analogous to above, for any function $f\colon D_I \to \C$ we set
\begin{align*}
 f_1 &\sceq f \cdot 1_{D_{1,\st}\times\{a\}}, & f_2 &\sceq f\cdot 1_{D_{2,\st}\times\{a\}},
\\
f_3 & \sceq f\cdot 1_{D_{3,\st}\times\{b\}}, & f_4 & \sceq f\cdot 1_{D_{1,\st}\times\{b\}},
\end{align*}
identify $f$ with the vector
\[
\begin{pmatrix}
f_1 
\\
f_2
\\
f_3
\\
f_4
\end{pmatrix}
\]
and may omit the components $\{a\}$ respectively $\{b\}$ from the domains. Then the (formal) transfer operator $\mc L_{I,s}$ has the matrix representation
\[
\mc L_{I,s} = 
\begin{pmatrix}
\tau_s(g_2) & \sum_{n\in\N}\tau_s(g_1^{-n}) & 0 & \tau_s(g_2) 
\\
\tau_s(g_2) & 0 & 0 & \tau_s(g_2) 
\\
\tau_s(g_3) & 0 & 0 & \tau_s(g_3)
\\
\tau_s(g_3) & 0 & \sum_{n\in\N}\tau_s(g_1^n) & \tau_s(g_3)
\end{pmatrix}.
\]

\section{Domain of definition, nuclearity, and meromorphic continuation}\label{sec:domain}

We now construct a Banach space $B$ of holomorphic functions on some set $\mc D\subseteq \C$ such that, for $\Rea s > 1/2$, the operator $\mc L_{I,s}$ is nuclear of order $0$ on $B$, and the map $s\mapsto \mc L_{I,s}$ extends (in a weak sense) meromorphically to all of $\C$.  By the latter we mean here that there exist a discrete set $P\subseteq \C$ and for each $s\in\C\setminus P$ a nuclear operator $\wt{\mc L}_{I,s}\colon B\to B$ of order $0$ which equals $\mc L_{I,s}$ for $\Rea s > 1/2$. Furthermore, for each $f\in B$ and each $z\in \mc D$, the map $s\mapsto \wt{\mc L}_{I,s}f(z)$ is meromorphic with poles in $P$, and the map $(s,z) \mapsto \wt{\mc L}_{I,s}f(z)$ is continuous on $(\C\setminus P) \times \mc D$. 

During this construction we have to choose a neighborhood of $\infty$ in $P^1(\C)$. For simplicity, we conjugate our setup with \[
 \mc C \sceq \bmat{0}{1}{-1}{\frac{\lambda}{2}} \quad \in \PSL_2(\R).
\]
We call $\Gamma_C \sceq \mc C \Gamma \mc C^{-1}$ the conjugate discrete subgroup, and $(D_C, F_C)$ the conjugation of the discrete dynamical system $(D_I, F_I)$. For any subset $E\subseteq \R\cup\{\infty\}$ let 
\[
 E_\st'\sceq E\setminus \bd_C,
\]
where $\bd_C \sceq \mc C(\bd)$. For $j\in\{1,2,3\}$ let 
\[
 h_j \sceq \mc C g_j \mc C^{-1} \quad \text{and} \quad  E_{j,\st'} \sceq \mc C(D_{j,\st}).
\]
Thus,
\begin{align*}
h_1 = \bmat{1}{0}{-\lambda}{1}, \quad h_2 = \bmat{-\frac\lambda2}{1}{-\frac34\lambda^2-1}{\frac32\lambda}, \quad h_3  = \bmat{-\frac\lambda2}{1}{\frac{\lambda^2}{4}-1}{-\frac\lambda2},
\end{align*}
and
\begin{align*}
E_{1,\st'} = \left(\frac{2}{\lambda+2}, \frac{2}{\lambda-2}\right)_{\st'}, 
\quad
E_{2,\st'}  = \left(\frac{2}{2-\lambda},0\right)_{\st'}, 
\quad
E_{3,\st'}  = \left( 0, \frac{2}{3\lambda-2}\right)_{\st'}.
\end{align*}
Then 
\[
 D_C = \big(E_{1,\st'} \times \{a\}\big) \cup \big( E_{2,\st'} \times \{a\}\big) \cup \big( E_{3,\st'} \times \{b\}\big) \cup \big( E_{1,\st'} \times \{b\}\big),
\]
and the submaps of $(D_C, F_C)$ can easily be read off from $(D_I, F_I)$. Analogous to before, we identify any function $f\colon D_C \to \C$ with the vector $(f_1,\ldots, f_4)^\top$ where $f_1,f_4$ are defined on $E_{1,\st'}$, $f_2$ on $E_{2,\st'}$, and $f_3$ on $E_{3,\st'}$. Then the (formal) transfer operator $\mc L_{C,s}$ associated to $(D_C, F_C)$ has the matrix representation
\[
\mc L_{C,s} =
\begin{pmatrix}
\tau_s(h_2) &  \sum_{n\in\N}\tau_s(h_1^{-n}) & 0 & \tau_s(h_2) 
\\
\tau_s(h_2) & 0 & 0 & \tau_s(h_2) 
\\
\tau_s(h_3) & 0 & 0 & \tau_s(h_3)
\\
\tau_s(h_3) & 0 & \sum_{n\in\N}\tau_s(h_1^n) &  \tau_s(h_3)
\end{pmatrix}.
\]
Let 
\begin{align*}
E_1 &\sceq \left(\frac{2(\lambda-1)}{2+(\lambda-1)\lambda},\frac{2(1-\lambda)}{2+\lambda(1-\lambda)}\right),
\\
E_2 &\sceq \left(\frac{2}{2-\lambda}, 0\right),
\\
\text{and}\quad
E_3 &\sceq \left(0,\frac{2}{3\lambda-2}\right).
\end{align*}
The set $E_j$ is the analytic hull of $E_{j,\st'}$, and, up to problems of convergence, $\mc L_{C,s}$ acts on the function vectors
\[
 f = 
\begin{pmatrix}
f_1\colon E_1 \to\C
\\
f_2\colon E_2 \to \C
\\
f_3\colon E_3\to \C
\\
f_4\colon E_1\to \C
\end{pmatrix}.
\]

One might find both statements more obvious when considering the original system $(D_I, F_I)$. Here, the set $E_j$ corresponds to $D_j$, where 
\[
 D_1 \sceq \left(\frac{1}{1-\lambda},\frac{1}{-1+\lambda}\right), \quad D_2\sceq (-1+\lambda,\infty), \quad D_3\sceq (-\infty, 1-\lambda).
\]

Let 
\[
 \mc T\sceq \mc C J \mc C^{-1} = \bmat{1}{0}{\lambda}{-1}.
\]
We set 
\begin{align*}
 \mc E_1 &\sceq \left\{ z\in\C\left\vert\ \left| z-\frac{2\lambda}{(\lambda+2)(\lambda-2)}\right| < \frac{4}{(\lambda-2)(\lambda+2)} \right.\right\},
\\
\mc E_2 & \sceq \left\{ z\in\C \left\vert\ \left| z-\frac{\lambda-1}{\lambda(2-\lambda)}\right| < \frac{3-2\lambda}{\lambda(2-\lambda)}\right.\right\}
\\
\mc E_3 & \sceq \mc T.\mc E_2.
\end{align*}
This means that $\mc E_1$ and $\mc E_2$ are open discs in $\C$ with centers on $\R$ such that the boundary of $\mc E_1$ contains $\frac{2}{\lambda+2}$ and $\frac{2}{\lambda-2}$, and the boundary of $\mc E_2$ contains $\frac{3}{2-\lambda}$ and $\frac{1}{\lambda}$. Under application of $\mc C^{-1}$, $\mc E_1$ corresponds to 
\[
 \mc D_1 \sceq \{ |z|< 1\}
\]
and $\mc E_2$ corresponds to the open disc $\mc D_2$ in $\C\cup\{\infty\}$ with center on $\R$ which contains $\infty$ and whose boundary passes through $\frac{5\lambda-4}{6}$ and $-\frac{\lambda}{2}$.

For $j=1,2,3$ we let
\[
 B(\mc E_j) \sceq \{ \text{$f\colon \mc E_j \to \C$ holomorphic}\ \mid\ \text{$f$ extends continuously to $\overline{\mc E_j}$}\}.
\]
Endowed with the supremum norm, the space $B(\mc E_j)$ is a Banach space. Further we define
\[
 B(\mc E) \sceq B(\mc E_1) \times B(\mc E_2) \times B(\mc E_3) \times B(\mc E_1)
\]
to be the direct product of the previous Banach spaces.

\begin{thm}\label{nuclear}
\begin{enumerate}[{\rm (i)}]
\item\label{nucleari} For $\Rea s > \frac12$, the transfer operator $\mc L_{C,s}$ is a self-map of $B(\mc E)$ and nuclear of order $0$.
\item The map $s\mapsto \mc L_{C,s}$ extends meromorphically to all of $\C$. The possible poles are located at $s=(1-k)/2$, $k\in\N_0$, and are of order $\leq 1$. For each pole $s_0$, there is a neighborhood $U$ of $s_0$ such that the meromorphic extension $\wt{\mc L}_{C,s}$ is of the form 
\[
 \wt{\mc L}_{C,s} = \frac1{s-s_0} \mc A_s + \mc B_s
\]
where the operators $\mc A_s$ and $\mc B_s$ are holomorphic on $U$, and $\mc A_s$ is of rank at most $2$.
\item The Fredholm determinant $s\mapsto \det(1-\wt{\mc L}_{C,s})$ is a meromorphic function on $\C$ with possible poles located at $s=(1-k)/2$, $k\in\N_0$. The order of a pole is at most $2$.
\end{enumerate}
\end{thm}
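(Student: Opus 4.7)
The plan is to treat the three parts in order, exploiting the explicit matrix form of $\mc L_{C,s}$: four of the entries are single composition operators of the form $\tau_s(h_j)$ with $h_j$ hyperbolic ($j=2,3$), while the two entries $\sum_{n\in\N}\tau_s(h_1^{\pm n})$ are infinite sums over iterates of a parabolic element. The hyperbolic entries will be handled by classical compact-composition-operator arguments, and all the analysis requiring real work — convergence, nuclear estimates, meromorphic continuation, rank of residue, pole order of the Fredholm determinant — will be concentrated on the two parabolic entries.

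For part~(i), I would first check geometrically that $h_2(\overline{\mc E_1})$, $h_3(\overline{\mc E_1})$, $h_2(\overline{\mc E_2})$ and $h_3(\overline{\mc E_3})$ are contained \emph{strictly} in $\mc E_1$; these are Möbius images of closed discs, so this is a finite calculation from the radii and centers listed before the theorem. Each $\tau_s(h_j)$ ($j=2,3$) then factors as the composition operator $f\mapsto f\circ h_j$ (nuclear of order $0$ from $B(\mc E_1)$ into $B(\mc E_k)$ by Grothendieck-type estimates on the disc algebra) followed by multiplication by the entire weight $j_s(h_j,\cdot)$. For the parabolic entries I would verify that $h_1^n.\overline{\mc E_2}$ is a sequence of discs shrinking to the parabolic fixed point $0$ at rate $1/n$, with $\sup_{z\in\overline{\mc E_2}}|j_s(h_1^n,z)|=O(n^{-2\Rea s})$; consequently each $\tau_s(h_1^{\pm n})$ is nuclear of order $0$, and the series converges in the nuclear norm whenever $\Rea s>1/2$, giving nuclearity of $\mc L_{C,s}$ of order $0$.

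For part~(ii), the hyperbolic entries are entire in $s$, so the meromorphic continuation reduces to the two parabolic sums. Writing $\tau_s(h_1^n)f(z)=(1-n\lambda z)^{-2s}f\!\left(\frac{z}{1-n\lambda z}\right)$ and Taylor-expanding $f$ at the parabolic fixed point $0$ as $f(w)=\sum_{k\geq 0}a_k(f)w^k$, I would interchange summations to obtain
\[
 \sum_{n\in\N}\tau_s(h_1^n)f(z) \;=\; \sum_{k\geq 0} a_k(f)\,z^k \sum_{n\in\N}(1-n\lambda z)^{-(2s+k)}.
\]
The inner sum is a Hurwitz-type zeta function in the variable $1/(\lambda z)$, hence extends meromorphically in $s$ with a single simple pole at $2s+k=1$, that is $s=(1-k)/2$. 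Interpreting the full expression as an operator on $B(\mc E_2)\to B(\mc E_1)$, the residue at $s=(1-k)/2$ is proportional to the rank-one operator $f\mapsto a_k(f)\,z^k$, since only the $k$-th term in the Taylor series produces a pole there. The analogous analysis for $\sum_n\tau_s(h_1^n)$ (entry $(4,3)$) gives a second rank-one residue, for a combined residue operator of rank at most $2$. This yields the stated local form $\wt{\mc L}_{C,s}=(s-s_0)^{-1}\mc A_s+\mc B_s$ with the prescribed pole locations and rank bound.

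Part~(iii) is then an algebraic consequence. Since $\wt{\mc L}_{C,s}$ is nuclear of order $0$ away from the poles, $s\mapsto\det(1-\wt{\mc L}_{C,s})$ is well-defined and holomorphic on $\C\setminus\{(1-k)/2:k\in\N_0\}$. Near a pole $s_0$ I would factor
\[
 1-\wt{\mc L}_{C,s} \;=\; \bigl(1-\mc B_s\bigr)\Bigl(1-(s-s_0)^{-1}(1-\mc B_s)^{-1}\mc A_s\Bigr)
\]
wherever $1-\mc B_s$ is invertible (which holds generically by the determinant of the first factor being holomorphic), and use that for any finite-rank operator $R$ of rank $\leq r$ one has $\det(1-\zeta R)$ polynomial of degree $\leq r$ in $\zeta$. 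Taking $\zeta=(s-s_0)^{-1}$ and $R=(1-\mc B_s)^{-1}\mc A_s$ (rank $\leq 2$) shows that the only singular contribution to $\det(1-\wt{\mc L}_{C,s})$ at $s_0$ is a pole of order at most $2$. The main obstacle throughout is the parabolic continuation in part~(ii): justifying the interchange of the $n$- and $k$-summations in a neighbourhood of each prospective pole, making the Hurwitz-zeta identification compatible with the Banach space $B(\mc E_1)$, and isolating precisely the rank-one character of each residue require careful estimates on the tails of the Taylor expansion and on the holomorphy of the continued kernel in $(s,z)$.
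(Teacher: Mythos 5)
Your proposal is correct and follows essentially the same route as the paper, whose own proof merely records the disc inclusions $h_2^{-1}.\overline{\mc E_1}\subseteq\mc E_1$, $h_2^{-1}.\overline{\mc E_2}\subseteq\mc E_1$, $h_1^n.\overline{\mc E_1}\subseteq\mc E_2$ and defers the nuclearity of the composition operators, the Hurwitz-zeta continuation of the parabolic sums, and the rank-two residue to Ruelle, Mayer and M\"oller--Pohl, so your write-up simply supplies the standard details those references contain. One bookkeeping slip worth fixing: for the entry $\sum_{n}\tau_s(h_1^{-n})\colon B(\mc E_2)\to B(\mc E_1)$ the relevant inclusion is $h_1^n.\overline{\mc E_1}\subseteq\mc E_2$ and the supremum of $|j_s(h_1^n,\cdot)|$ must be taken over $\overline{\mc E_1}$ (the target variable), not over $\overline{\mc E_2}$.
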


\begin{proof}
We observe that 
\begin{itemize}
\item $\overline E_1 \subseteq\mc E_1$, $\overline E_2\subseteq \mc E_2$, $\overline E_3 \subseteq \mc E_3$,
\item $\mc T.\mc E_1 = \mc E_1$, $\mc T.\mc E_2 = \mc E_3$,
\item $h_2^{-1}.\overline{\mc E_1} \subseteq \mc E_1$, $h_2^{-1}.\overline{\mc E_2}\subseteq \mc E_1$, and $h_1^n.\overline{\mc E_1} \subseteq \mc E_2$ for all $n\in\N$.
\end{itemize}
Then the proof of this theorem is a straighforward adaption of \cite{Ruelle_zeta, Mayer_thermo, Moeller_Pohl}.
\end{proof}

\section{The Selberg zeta function as the Fredholm determinant of $\wt{\mc L}_{I,s}$}

We return to the system $(D_I, F_I)$ and consider, for $\Rea s > 1/2$, the formal transfer operator $\mc L_{I,s}$ as an actual operator on $B(\mc D_1)\times B(\mc D_2) \times B(\mc D_3) \times B(\mc D_1)$ with $\mc D_3 \sceq J.\mc D_2$. We denote by $s\mapsto \wt{\mc L}_{I,s}$ the meromorphic extension of $s\mapsto \mc L_{I,s}$ to all of $\C$. 

\begin{thm}\label{fredholm}
We have $Z(s) = \det(1-\wt{\mc L}_{I,s})$. More precisely, for $\Rea s >\delta$, we have $Z(s) = \det(1-\mc L_{I,s})$, and the right hand side (and thus also the left hand side) extends meromorphically to all of $\C$ with possible poles of order at most $2$ at $s=(1-k)/2$, $k\in\N_0$.
\end{thm}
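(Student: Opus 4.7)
The plan is to verify that the system $(D_I, F_I)$ meets the three hypotheses of Section~\ref{sec:thermo} and then apply the formal computation given there to obtain $Z(s) = \det(1 - \mc L_{I,s})$ on $\{\Rea s > \delta\}$; meromorphic continuation and the pole count are then inherited directly from Theorem~\ref{nuclear}.

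Hypothesis (i) of Section~\ref{sec:thermo} is immediate from the explicit list of submaps of $(D_I, F_I)$, whose acting elements all lie in $\{g_2, g_3\} \cup \{g_1^n : n \in \Z \setminus \{0\}\} \subseteq \Gamma$. Hypothesis (ii) is precisely Proposition~\ref{bij_final}, combined with the standard bijection between conjugacy classes of hyperbolic elements in $\Gamma$ and periodic geodesics on $X$ that is recalled in Section~\ref{sec:thermo}. Hypothesis (iii), nuclearity of order $0$ for $\Rea s > 1/2$, is transported from Theorem~\ref{nuclear}: the M\"obius conjugation $\mc C$ induces a Banach space isomorphism between $B(\mc D_1) \times B(\mc D_2) \times B(\mc D_3) \times B(\mc D_1)$ and $B(\mc E)$ that intertwines $\mc L_{I,s}$ with $\mc L_{C,s}$, hence preserves nuclearity and Fredholm determinants.

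With the hypotheses in place, I would then run the computation of Section~\ref{sec:thermo} verbatim. The only step requiring a concrete check on our Banach space is the trace identity $\Tr \tau_s(a) = N(a)^{-s}/(1 - N(a)^{-1})$ for hyperbolic $a \in \Gamma$ whose attracting fixed point lies in the appropriate disc $\mc D_j$; this is the standard Cauchy-integral computation for a M\"obius action on a disc contracting onto its attracting fixed point, and only the diagonal blocks of $\mc L_{I,s}^n$ contribute to $\Tr \mc L_{I,s}^n$ since off-diagonal compositions correspond to non-closed orbits on the labelled cross section. Summing over periodic $F_I$-orbits and invoking Proposition~\ref{bij_final} to identify $\bigcup_n P_n$ with the set of hyperbolic conjugacy classes (and the accumulated first return times with $\log N(a)$ by the formulas in Section~\ref{sec:thermo}) yields $Z(s) = \det(1 - \mc L_{I,s})$ on $\{\Rea s > \delta\}$, where all the series involved converge absolutely.

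For the meromorphic extension to all of $\C$, Theorem~\ref{nuclear}, transported through $\mc C$, shows that $s \mapsto \mc L_{I,s}$ extends meromorphically to $\C$ with at most simple poles at $s = (1 - k)/2$, $k \in \N_0$ (arising from the two parabolic geometric series $\sum_{n \in \N} \tau_s(g_1^{\pm n})$), and that $\det(1 - \wt{\mc L}_{I,s})$ is meromorphic on $\C$ with poles of order at most $2$ at the same points. The identity $Z(s) = \det(1 - \wt{\mc L}_{I,s})$ then propagates by analytic continuation from $\{\Rea s > \delta\}$, forcing the stated pole structure on $Z$. The principal obstacle will be the accelerated parabolic blocks: the individual summands $\tau_s(g_1^{\pm n})$ are not nuclear in the required sense, so one must check both that their geometric-series sum is compatible with the trace identity used above, and that the periodic orbits in $P_n$ containing hyperbolic products of the form $g_1^{\pm n} g_2$ and $g_1^{\pm n} g_3$ are counted exactly once, without collision with the contributions of the non-accelerated words built from $g_2$ and $g_3$ alone.
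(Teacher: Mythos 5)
Your proposal is correct and follows essentially the same route as the paper: the paper's proof likewise combines Theorem~\ref{nuclear} (nuclearity and the meromorphic continuation with the stated pole orders) with Proposition~\ref{bij_final} and the thermodynamic formalism of Section~\ref{sec:thermo} to get $Z(s)=\det(1-\mc L_{I,s})$ for $\Rea s>\delta$. The additional checks you flag (the conjugation by $\mc C$, the trace identity, and the bookkeeping for the accelerated parabolic words) are exactly the points the paper treats as standard and delegates to Section~\ref{sec:thermo} and the cited references.
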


\begin{proof}
By Theorem~\ref{nuclear}\eqref{nucleari}, for $\Rea s > 1/2$, the operators $\mc L_{I,s}$ have a Fredholm determinant. Since 
the equivalence classes of $F_I$-periodic orbits are in bijection with the conjugacy classes of the hyperbolic elements in $\Gamma$ (Proposition~\ref{bij_final}), the thermodynamic formalism (cf.\@ Section~\ref{sec:thermo}) now implies $Z(s) = \det(1-\mc L_{I,s})$ for $\Rea s > \delta$. Theorem~\ref{nuclear} completes the proof. 
\end{proof}

We remark that the proof of Theorem~\ref{fredholm} does not rely on the already known existence of a meromorphic continuation of the Selberg zeta function \cite{Guillope}. It rather constitutes an alternative proof of this fact.

\section{Billiard flow}\label{sec:billiard}

The element $J = \textbmat{-1}{0}{0}{1}$ commutes with the Hecke triangle group $\Gamma$. The extended  discrete group
\[
 \wt\Gamma \sceq \langle \Gamma, J\rangle = \Gamma \cup J\Gamma \quad \leq \PGL_2(\R)
\]
is the triangle group underlying $\Gamma$. A fundamental domain for its action on $\h$ is indicated in Figure~\ref{funddomtriangle}.

\begin{figure}[h]
\begin{center}
\includegraphics{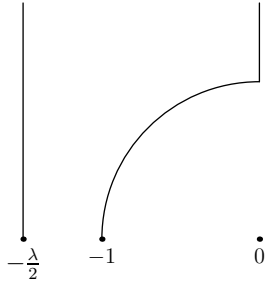}
\end{center}
\caption{Fundamental domain for $\wt\Gamma$.}
\label{funddomtriangle}
\end{figure}

In this section we consider the billiard flow on $\wt\Gamma\backslash\h$ and use the discretizations from Section~\ref{sec:systems} for the geodesic flow on $\Gamma\backslash\h$ to provide weighted discretizations for the billiard flow. The Fredholm determinants of the two associated families of weighted transfer operators define dynamical zeta functions which give a factorization of the Selberg zeta function for the geodesic flow. We suggest to think about the weighting and the factorization as encoding the Dirichlet resp.\@ Neumann boundary conditions for the billiard flow and comment in Section~\ref{sec:outlook} below on the conjectural relation.

Recall the map $\pi\colon S\h \to \Gamma\backslash S\h$ and define $\pi_J\colon S\h \to \wt\Gamma\backslash S\h$ to be the canonical quotient map. 
The cross sections $\wh C_R$ and $\wh C_I$ from Section~\ref{sec:systems} for the geodesic flow are obviously invariant under the action of $J$, and we have $J.C'_a = C'_b$. The following lemma is then immediate.

\begin{lemma}
The sets 
\[
 \wh C_{R^J} \sceq \pi_J\big( \pi^{-1}(\wh C_R) \big) \quad\text{and}\quad \wh C_{I^J} \sceq \pi_J \big( \pi^{-1}(\wh C_I) \big)
\]
are cross sections for the billiard flow on $\wt\Gamma\backslash\h$ with sets of representatives $C'_{R^J} \sceq C'_a$ resp.\@  $C'_{I^J} \sceq C'_{I,a}$. 
\end{lemma}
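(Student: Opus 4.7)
My plan is to deduce the cross section property of $\wh C_{R^J}$ (and $\wh C_{I^J}$) directly from the already established cross section property of $\wh C_R$ (and $\wh C_I$) for the geodesic flow on $\Gamma\backslash\h$, exploiting the two observations noted just before the lemma: the $J$-invariance of $\wh C_R$ and $\wh C_I$, and the identity $J.C'_a = C'_b$ (together with its analogue $J.C'_{I,a}=C'_{I,b}$). The verification that $C'_a$, respectively $C'_{I,a}$, is a set of representatives will then follow from a short case analysis.

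The first step is to record the identity
\[
 \pi_J^{-1}(\wh C_{R^J}) = \pi^{-1}(\wh C_R),
\]
which follows from $\wt\Gamma = \Gamma\cup J\Gamma$, the $J$-invariance $J.C'_R = C'_R$ (using $J.C'_a = C'_b$), and the fact that $J$ normalizes $\Gamma$, giving $\wt\Gamma.C'_R = \Gamma.C'_R = \pi^{-1}(\wh C_R)$. Since every billiard trajectory on $\wt\Gamma\backslash\h$ is the projection of a geodesic on $\h$, the intersections of the trajectory with $\wh C_{R^J}$ correspond bijectively to the intersections of the lift with $\pi_J^{-1}(\wh C_{R^J}) = \pi^{-1}(\wh C_R)$; discreteness in space and time thus transfers immediately from $\wh C_R$ to $\wh C_{R^J}$.

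Next I would handle periodic billiard orbits. A closed billiard trajectory on $\wt\Gamma\backslash\h$ lifts to a geodesic $\gamma$ on $\h$ whose $\wt\Gamma$-stabilizer contains a hyperbolic element $g$. Since $[\wt\Gamma:\Gamma]=2$, either $g\in\Gamma$ or $g^2\in\Gamma$; in either case $\gamma$ descends to a periodic geodesic on $\Gamma\backslash\h$, which by the cross section property of $\wh C_R$ meets $\wh C_R$ infinitely often. The identification of preimages from the previous step then produces infinitely many intersections of the billiard orbit with $\wh C_{R^J}$.

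Finally, I would verify that $\pi_J|_{C'_a}\colon C'_a\to\wh C_{R^J}$ is a bijection. Surjectivity is immediate: given $\wh v\in\wh C_{R^J}$, pick a representative $v\in C'_R = C'_a\cup C'_b$; if $v\in C'_b$, replace it by $J.v\in J.C'_b = C'_a$. For injectivity, suppose $v_1,v_2\in C'_a$ satisfy $g.v_1 = v_2$ for some $g\in\wt\Gamma$. If $g\in\Gamma$, then $v_1=v_2$ because $\pi\colon C'_R\to\wh C_R$ is a bijection. If $g = Jh$ with $h\in\Gamma$, then $h.v_1 = J.v_2\in J.C'_a = C'_b$; thus the $\Gamma$-orbit of $v_1\in C'_a$ meets $C'_b$, contradicting $\Gamma.C'_a\cap C'_b = \emptyset$, which is guaranteed by $C'_R$ being a set of representatives. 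The same argument goes through verbatim for $\wh C_{I^J}$ with $C'_I = C'_{I,a}\cup C'_{I,b}$ and $J.C'_{I,a} = C'_{I,b}$. This injectivity case distinction is the only step requiring genuine attention; the rest is a direct transcription of the geodesic-flow setup through the additional $J$-quotient.
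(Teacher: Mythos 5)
Your proof is correct and takes essentially the same route as the paper, which states the lemma as ``immediate'' from precisely the two facts you exploit: the $J$-invariance of $\wh C_R$ and $\wh C_I$, and the identities $J.C'_a=C'_b$, $J.C'_{I,a}=C'_{I,b}$. Your write-up merely makes explicit the details (the identification $\pi_J^{-1}(\wh C_{R^J})=\pi^{-1}(\wh C_R)$ via $\wt\Gamma=\Gamma\cup J\Gamma$, and the injectivity case analysis) that the paper leaves to the reader.
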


We endow the arising discrete dynamical systems $(D_{R^J}, F_{R^J})$ (for $(\wh C_{R^J}, C'_{R^J})$) and $(D_{I^J}, F_{I^J})$ (for $(\wh C_{I^J}, C'_{I^J})$) with two different weight functions, which essentially only effect the submaps with an acting element from $\wt\Gamma\setminus \Gamma$. The weighted systems $(D_{R^J}, F_{R^J}, \pm)$ are given by the submaps
\begin{align*}
(0,1)_\st & \to (-1,\infty)_\st, & x & \mapsto g_2.x, && \text{weight: $1$}
\\
(0,1)_\st & \to (-1,\infty)_\st, & x & \mapsto g_2J.x, && \text{weight: $\pm 1$}
\\
(\lambda-1,\infty)_\st & \to (-1,\infty)_\st, & x & \mapsto g_1^{-1}.x, && \text{weight: $1$.}
\end{align*}
We refer to Figure~\ref{nexttriangle} for the location of the future intersections and the relation between $C'_{R^J}$ and $C'_R$, which allows to read off these submaps and also shows how to deduce them directly from the submaps of $(D_R, F_R)$.

\begin{figure}[h]
\begin{center}
\includegraphics{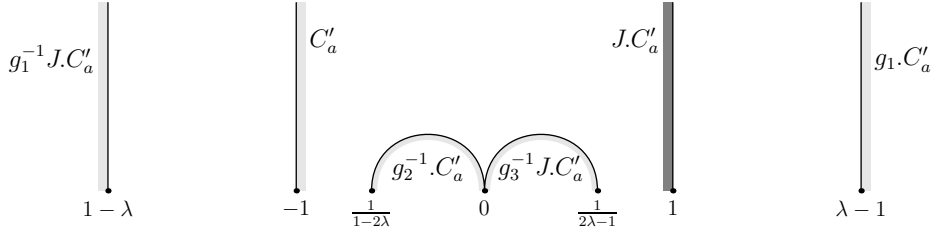}
\end{center}
\caption{Future intersections for $C'_{R^J}$ and relation to $C'_R$.}
\label{nexttriangle}
\end{figure}

The weighted systems $(D_{I^J}, F_{I^J}, \pm)$ are given by the submaps ($n\in\N$)
\begin{align*}
(-1,0)_\st & \to (-1,\infty)_\st, &x &\mapsto g_2.x, && \text{weight: $1$}
\\
(0,1)_\st & \to (-1,\infty)_\st, & x & \mapsto g_2J.x, && \text{weight: $\pm 1$}
\\
(-1+n\lambda, -1 + (n+1)\lambda)_\st & \to (-1,1)_\st, & x & \mapsto g_1^{-n}.x, && \text{weight: $1$.}
\end{align*}

The associated transfer operators include the weights as factors for the respective submaps. They are here
\[
 \mc L_{R^J,s}^\pm = \tau_s(g_2) + \tau_s(g_1^{-1}) \pm \tau_s(g_2J),
\]
acting on functions $D_{R^J} \to \C$, and 
\[
 \mc L_{I^J,s}^\pm =
\begin{pmatrix}
\tau_s(g_2) \pm \tau_s(g_2J) & \sum_{n\in\N}\tau_s(g_1^{-n})
\\
\tau_s(g_2) \pm \tau_s(g_2J) & 0 
\end{pmatrix},
\]
formally acting on functions 
\[
 f = 
\begin{pmatrix}
f_1\colon (-1,1)_\st \to\C
\\
f_2\colon (\lambda-1,\infty)_\st\to\C
\end{pmatrix}.
\]

\begin{thm}\label{fredtriangle}
For $\Rea s > 1/2$, the operators $\mc L_{I^J,s}^\pm$ are nuclear of order $0$ on $B(\mc D_1)\times B(\mc D_2)$. The maps $s\mapsto \mc L_{I^J,s}^\pm$ extend meromorphically to all of $\C$ with possible poles at $s=(1-k)/2$, $k\in\N_0$, of order at most $1$. Let $s\mapsto \wt{\mc L}_{I^J,s}^\pm$ denote the extended maps. Their Fredholm determinants 
\[
 Z_\pm(s) \sceq \det\left( 1 - \wt{\mc L}_{I^J,s}^\pm \right)
\]
define meromorphic functions on $\C$, and 
\[
 Z_+(s) Z_-(s) = \det\left( 1 - \wt{\mc L}_{I^J,s}^+ \right)\det\left( 1 - \wt{\mc L}_{I^J,s}^- \right) = \det\left( 1 - \wt{\mc L}_{I,s} \right) = Z(s).
\]
\end{thm}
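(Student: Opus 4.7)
The strategy is to realize $\mc L_{I^J,s}^\pm$ as the restrictions of $\mc L_{I,s}$ to the $\pm 1$-eigenspaces of the natural involution on its Banach space induced by $J$. Since the $J$-invariance of $\wh C_I$ with $J.C'_a = C'_b$ was already noted in Section~\ref{sec:billiard}, the expected decomposition $\mc L_{I,s} = \mc L_{I^J,s}^+ \oplus \mc L_{I^J,s}^-$ should transfer all properties proved in Theorems~\ref{nuclear} and \ref{fredholm} to each factor and simultaneously yield the factorization $Z = Z_+Z_-$.

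Concretely, since $J^2 = I$ in $\PGL_2(\R)$ and $j_s(J,x)\equiv 1$, the operator $\tau_s(J)$ is the $s$-independent involution $f\mapsto f\circ J$, which restricts to bijections $B(\mc D_1)\to B(\mc D_1)$ and $B(\mc D_2)\to B(\mc D_3)$. Using $JTJ = T^{-1}$ and $JSJ = S$ in $\PGL_2(\R)$, one obtains the conjugation identities $Jg_2J = g_3$, $Jg_3J = g_2$ and $Jg_1^nJ = g_1^{-n}$ for all $n\in\Z$. I would then define an involution $\sigma$ on $V\sceq B(\mc D_1)\times B(\mc D_2)\times B(\mc D_3)\times B(\mc D_1)$ by
\[
 \sigma(f_1,f_2,f_3,f_4) \sceq \big(\tau_s(J)f_4,\,\tau_s(J)f_3,\,\tau_s(J)f_2,\,\tau_s(J)f_1\big),
\]
and verify $\sigma \mc L_{I,s} = \mc L_{I,s}\sigma$ by a row-by-row computation with the matrix form of $\mc L_{I,s}$ from Section~\ref{sec:systems}, using the identities above and the multiplicativity of $\tau_s$. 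The $\pm 1$-eigenspaces $V_\pm$ are each isomorphic to $B(\mc D_1)\times B(\mc D_2)$ through the parametrization $(f_1,f_2)\mapsto(f_1, f_2,\pm\tau_s(J)f_2,\pm\tau_s(J)f_1)$; extracting the first two components of $\mc L_{I,s}|_{V_\pm}$ in these coordinates, and recognizing $\tau_s(g_2)\tau_s(J) = \tau_s(g_2J)$, produces precisely the matrix of $\mc L_{I^J,s}^\pm$ as stated in the theorem. The cross-series $\sum_n\tau_s(g_1^n)$ in the $(4,3)$-entry of $\mc L_{I,s}$ is identified with $\sum_n\tau_s(g_1^{-n})$ in the $(1,2)$-entry under $\sigma$, so only one parabolic series survives in each factor.

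Since $V_\pm$ are closed, complemented and $\mc L_{I,s}$-invariant, nuclearity of order $0$ for $\Rea s > 1/2$ and the meromorphic continuation $s\mapsto \wt{\mc L}_{I^J,s}^\pm$ to all of $\C$ follow immediately from Theorem~\ref{nuclear}. The improved pole order (at most $1$ rather than $2$) reflects exactly the fact noted above: each $\mc L_{I^J,s}^\pm$ contains only one parabolic series $\sum_n\tau_s(g_1^{-n})$, while $\mc L_{I,s}$ contains two. Finally, multiplicativity of the Fredholm determinant over direct sums of invariant subspaces gives
\[
 \det\big(1-\wt{\mc L}_{I,s}\big) = \det\big(1-\wt{\mc L}_{I^J,s}^+\big)\,\det\big(1-\wt{\mc L}_{I^J,s}^-\big),
\]
and combining with Theorem~\ref{fredholm} yields $Z(s) = Z_+(s)Z_-(s)$.

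The main technical obstacle is the direct verification that $\sigma$ intertwines $\mc L_{I,s}$ with itself and the precise identification of the restrictions with the stated matrices: although conceptually transparent, this is a careful bookkeeping of four rows of the matrix of $\mc L_{I,s}$, each requiring the correct ordering of conjugates (so that occurrences such as $\tau_s(g_2 J)$ versus $\tau_s(Jg_2)$ fall into the entries predicted by $\sigma$) and the correct transformation of the two parabolic sums into one another under $\tau_s(J)$.
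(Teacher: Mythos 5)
Your proposal is correct and follows essentially the same route as the paper: the paper writes down an explicit involution $\mc P_s$ built from $\tau_s(J)$ and verifies $\mc P_s \mc L_{I,s}\mc P_s = \diag\big(\mc L_{I^J,s}^+,\ \mc R_s\mc L_{I^J,s}^-\mc R_s\big)$, which is exactly the change of basis diagonalizing your symmetry $\sigma$, so your eigenspace decomposition $V=V_+\oplus V_-$ with the identifications $V_\pm\cong B(\mc D_1)\times B(\mc D_2)$ is the same computation in different coordinates. The deduction of nuclearity, meromorphic continuation, and the factorization $Z=Z_+Z_-$ from Theorems~\ref{nuclear} and \ref{fredholm} together with multiplicativity of the Fredholm determinant matches the paper's argument.
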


\begin{proof}
Let 
\[
 \mc P_s \sceq \frac{1}{\sqrt{2}} 
\begin{pmatrix}
1 &  &  & \tau_s(J) 
\\
 & 1 & \tau_s(J) &
\\
 & \tau_s(J) & -1 &
\\
\tau_s(J) & & & -1
\end{pmatrix}
\]
and 
\[
 \mc R_s \sceq  \mat{}{\tau_s(J)}{\tau_s(J)}{}.
\]
Then $\mc P_s^2 = \id$, $\mc R_s^2 = \id$, and
\begin{equation}\label{conjugation}
 \mc P_s \mc L_{I,s} \mc P_s = \mat{ \mc L_{I^J,s}^+ }{}{}{ \mc R_s\mc L_{I^J,s}^-\mc R_s }.
\end{equation}
This, together with Theorem~\ref{nuclear} and \ref{fredholm}, proves all statements. 
\end{proof}

Since the Selberg zeta function $Z$ for $\Gamma$ does not vanish for $\Rea s > \delta\ (>1/2)$ (cf.\@ Section~\ref{sec:selberg}) and the transfer operators $\mc L_{I,s}$ are defined for $\Rea s > 1/2$, the thermodynamic formalism shows that
 \[
 -\sum_{\ell\in\N} \frac{1}{\ell} \Tr \mc L^\ell_{I,s} \quad \big( =\log \det(1-\mc L_{I,s}) = \log Z(s) \big)
\]
converges for $\Rea s > \delta$ (cf.\@ the proof of Theorem~\ref{fredholm}). By \eqref{conjugation} in the proof of Theorem~\ref{fredtriangle}, the analogous statements hold for the operators $\mc L_{I^J,s}^\pm$. This allows us to provide, in Theorem~\ref{explicit} below, explicit formulas for their Fredholm determinants in terms of the conjugacy classes of primitive hyperbolic elements in $\wt\Gamma$ and their norms, or equivalently, the length spectrum of the billiard flow. It shows that $Z_\pm$ are Selberg-like zeta functions.

For the statement of Theorem~\ref{explicit}, we recall that an element $h\in\wt\Gamma$ is called hyperbolic if and only if $h^2 \in \Gamma$ is hyperbolic. The norm of a hyperbolic element $h\in\wt\Gamma$ is defined as
\[
 N(h) \sceq N(h^2)^{1/2}.
\]
An element $h\in\wt\Gamma$ is called primitive if $h_0^n = h$ for $h_0\in\wt\Gamma$ and $n\in\Z$ implies $|n|=1$. We let 
\[
[h] = \{ ghg^{-1} \mid g\in\wt\Gamma\}
\]
denote the $\wt\Gamma$-conjugacy class of $h$. Further, we denote by $[\wt\Gamma]_p$ the set of $\wt\Gamma$-conjugacy classes of the primitive hyperbolic elements in $\wt\Gamma$, and by $[\wt\Gamma]_h$ the set of $\wt\Gamma$-conjugacy classes of the hyperbolic elements in $\wt\Gamma$.

\begin{thm}\label{explicit}
For $\Rea s > \delta$ we have
\begin{align*}
Z_+(s) & = \prod_{ [g] \in [\wt\Gamma]_p } \prod_{k=0}^\infty \left( 1 - \det g^k N(g)^{-(s+k)}\right)
\intertext{and}
Z_-(s) & = \prod_{ [g] \in [\wt\Gamma]_p } \prod_{k=0}^\infty \left( 1 - \det g^{k+1} N(g)^{-(s+k)}\right).
\end{align*}
\end{thm}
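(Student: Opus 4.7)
The plan is to mimic the thermodynamic formalism derivation of $Z(s)=\det(1-\mc L_{I,s})$ given in Section~\ref{sec:thermo}, but to carry out the computation directly for the weighted systems $(D_{I^J},F_{I^J},\pm)$. Starting from the expansion
\[
-\log Z_\pm(s) \;=\; \sum_{n\ge 1}\frac{1}{n}\Tr\big(\mc L^\pm_{I^J,s}\big)^n,
\]
valid in the half-plane of absolute convergence, I would expand each trace as a sum over $F_{I^J}$-periodic orbits, track the $\pm$-sign weights carefully, and then reorganize the resulting series by primitive $\wt\Gamma$-conjugacy classes.

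Three ingredients are needed. First, in analogy with Proposition~\ref{bij_final}, the equivalence classes of $F_{I^J}$-periodic orbits should be in bijection with the elements of $[\wt\Gamma]_h$, assigning to each orbit the composition $a\in\wt\Gamma$ of its acting generators; this is inherited from the $F_I$-case together with the decomposition $\wt\Gamma=\Gamma\sqcup J\Gamma$ and the $J$-invariance of $\wh C_I$. Second, since the weights on the three submaps are $1$, $\pm 1$, $1$, the product of weights along a closed orbit of length $n$ equals $1$ in the $+$-case and equals $(-1)^{e}=\det a$ in the $-$-case, where $e$ counts the number of $J$-edges; this uses $\det g_1=\det g_2=+1$ and $\det(g_2J)=-1$. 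Third, the classical trace identity extends uniformly to $\wt\Gamma$: for any hyperbolic $h\in\wt\Gamma$ one has
\[
\Tr\tau_s(h) \;=\; \frac{N(h)^{-s}}{1-(\det h)\,N(h)^{-1}} \;=\; \sum_{k\ge 0}(\det h)^k\, N(h)^{-(s+k)}.
\]
When $\det h=+1$ this is the familiar formula; when $h$ is a glide reflection, the derivative of $h$ at its attracting fixed point equals $-N(h)^{-1}$, so the eigenvalues of $\tau_s(h)$ on holomorphic germs at that point are $(-1)^k N(h)^{-(s+k)}$, and the identity follows by summing the geometric series.

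With these in hand, I would write
\[
\Tr(\mc L^\pm_{I^J,s})^n \;=\; \sum_{a\in P_n^J} w_\pm(a)\,\Tr\tau_s(a),
\]
with $P_n^J$ denoting the set of acting elements of $F_{I^J}$-periodic orbits of length $n$. Using the standard primitive-orbit reindexing ($n_0$ fixed points on each primitive orbit of length $n_0$, taken over every multiple $n=mn_0$) together with the multiplicativities $w_\pm(h_0^m)=w_\pm(h_0)^m$, $\det(h_0^m)=(\det h_0)^m$, $N(h_0^m)=N(h_0)^m$, the series reduces to
\[
\sum_{n\ge 1}\frac{1}{n}\Tr(\mc L^\pm_{I^J,s})^n \;=\; \sum_{[h_0]\in[\wt\Gamma]_p}\sum_{m\ge 1}\frac{1}{m}\, w_\pm(h_0^m)\Tr\tau_s(h_0^m).
\]
Substituting $w_+(h_0^m)=1$ resp.\ $w_-(h_0^m)=(\det h_0)^m$ and the trace identity, and exchanging the sums over $m$ and $k$, the inner sum in $m$ sums to $-\log\bigl(1-(\det h_0)^{k}N(h_0)^{-(s+k)}\bigr)$ resp.\ $-\log\bigl(1-(\det h_0)^{k+1}N(h_0)^{-(s+k)}\bigr)$; exponentiating gives the claimed products.

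The main obstacle, beyond the routine sign bookkeeping, will be a careful proof of the trace identity for glide reflections and of the orbit-class bijection for $F_{I^J}$ in the range over all of $\wt\Gamma$ (in particular for $a$ orientation-reversing, whose square corresponds to a $\Gamma$-closed geodesic folded by $J$). Convergence in the half-plane $\Rea s>\delta$ can be read off a posteriori from $Z=Z_+Z_-$ and the non-vanishing of $Z$ there (Section~\ref{sec:selberg}), but I would prefer to deduce it intrinsically from the absolute convergence of the double product in the stated range.
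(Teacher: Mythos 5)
Your proposal is correct and follows essentially the same route as the paper: the bijection between $F_{I^J}$-periodic orbits and $[\wt\Gamma]_h$ inherited from Proposition~\ref{bij_final} via the $J$-symmetry, the identification of the orbit weight with $\det a$ in the $-$-case, the trace identity $\Tr\tau_s(h)=N(h)^{-s}/(1-\det h\cdot N(h)^{-1})$ (which the paper simply cites from \cite{Pohl_spectral_hecke} rather than rederiving at the fixed point), and the primitive-class reindexing with multiplicity $\ell(a)/n(a)$. The only cosmetic difference is that the paper runs the chain of equalities from the Euler product toward $-\sum_\ell \frac1\ell\Tr(\mc L^{\pm}_{I^J,s})^\ell$ rather than the reverse, and settles convergence for $\Rea s>\delta$ exactly by the a posteriori argument via $Z=Z_+Z_-$ that you mention.
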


\begin{proof}
We start by investigating how $F_{I^J}$-periodic orbits are related to $\wt\Gamma$-conjugacy classes of the hyperbolic elements in $\wt\Gamma$. We recall from Proposition~\ref{bij_final} that the $\Gamma$-conjugacy classes of the hyperbolic elements in $\Gamma$ are in bijection with the equivalence classes of $F_I$-periodic orbits. This and \eqref{conjugation} (Theorem~\ref{fredtriangle}) implies that $[\wt\Gamma]_h$ is in bijection with the equivalence classes of $F_{I^J}$-periodic orbits. Instead of using \eqref{conjugation} one can deduce this fact also by analyzing the relation between hyperbolic elements in $\Gamma$ and in $\wt\Gamma$, and the relation between $F_I$-periodic orbits and $F_{I^J}$-periodic orbits. As for geodesic flows, we identify any $F_{I^J}$-periodic orbit $(x_k)_{k=1}^\ell$ with 
the sequence $(a_k)_{k=1}^\ell$ in $\{g_2, g_2J, g_1^{-n} \mid n\in\N \}$ of the acting elements in the submaps, that is  $a_k.x_k  = x_{k+1}$ for $k=1,\ldots, n-1$, and $a_n.x_n=x_1$. For $\ell\in\N$, we let $\Per_\ell$ denote the set of arising sequences $(a_k)_{k=1}^\ell$ of length $\ell$, and let $P_\ell$ be the set of elements $a\sceq a_1\cdots a_\ell$. The map
\[
 H\colon \bigcup_{\ell\in\N} \Per_\ell \to \bigcup_{\ell\in\N} P_\ell,\quad (a_k)_{k=1}^\ell \mapsto a_1\cdots a_\ell,
\]
is injective, its image consists of hyperbolic elements and contains at least one representative for each $\wt\Gamma$-conjugacy class. For a hyperbolic element $a\in\wt\Gamma$ let $n(a)$ denote the (unique) positive integer such that $a = a_0^{n(a)}$ for some (unique) primitive hyperbolic element $a_0\in\wt\Gamma$. Note that $n(a)$ is constant on the conjugacy class of $a$. For $[a]\in [\wt\Gamma]_h$ pick any representative $a\in\bigcup_{\ell\in\N} P_\ell$, and let $\ell(a)$ denote the length of its representing sequence $(a_k)_{k=1}^\ell$. Note that also $\ell(a)$ only depends on $[a]$. Then $m(a) \sceq \ell(a)/n(a)$ is integral and $(a_k)_{k=1}^\ell$ equals $n(a)$ consecutions of $(a_1,\ldots, a_{m(a)})$. Therefore, $[a]$ is represented by $m(a)$ elements in $P_{\ell(a)}$.

We set
\[
 b_s^+(a) \sceq \tau_s(a) \quad\text{and}\quad b_s^-(a) \sceq \det a \cdot \tau_s(a).
\]
When considered as acting on $B(\mc D_1)$ or $B(\mc D_2)$ (as appropriate), we have (see \cite{Pohl_spectral_hecke}) 
\[
 \Tr b_s^+(a) = \frac{N(a)^{-s}}{1- \det a\cdot N(a)^{-1}} \quad\text{and}\quad \Tr b_s^{-}(a) = \frac{\det a\cdot N(a)^{-s}}{1-\det a\cdot N(a)^{-1}}.
\]
Moreover, for $\Rea s >\delta$, we have
\[
 \Tr \big(\mc L_{I^J,s}^\pm\big)^\ell = \sum_{a\in P_\ell} \Tr b_s^\pm(a).
\]
Then
\begin{align*}
\log Z_-(s) & = - \sum_{[a]\in [\wt\Gamma]_h} \frac{1}{n(a)} \frac{\det a\cdot N(a)^{-s}}{1-\det a\cdot N(a)^{-1}}
\\
& = -\sum_{\ell\in\N} \frac{1}{\ell} \sum_{\stackrel{[a]\in [\wt\Gamma]_h}{\ell(a)=\ell}} \frac{\ell(a)}{n(a)} \frac{\det a\cdot N(a)^{-s}}{1-\det a\cdot N(a)^{-1}}
\\
& = -\sum_{\ell\in\N} \frac{1}{\ell} \sum_{a\in P_\ell} \frac{\det a\cdot N(a)^{-s}}{1-\det a\cdot N(a)^{-1}}
\\
& = - \sum_{\ell\in\N} \frac{1}{\ell} \sum_{a\in P_\ell} \Tr b_s^-(a)
\\
& = - \sum_{\ell\in\N} \frac{1}{\ell} \Tr \mc L^\ell_{I^J,s},
\end{align*}
which shows the claimed identity for $Z_-$. 
The proof for $Z_+$ is analogous.
\end{proof}

We remark that $Z_\pm$ are actually dynamical zeta functions. As for geodesic flows, the $\wt\Gamma$-conjugacy classes of hyperbolic element in $\wt\Gamma$ are (here) in bijection with the periodic billiards on $\wt\Gamma\backslash\h$. If $\wt\gamma$ is such a periodic billiard of length $\ell(\wt\gamma)$ and $h$ an associated hyperbolic element, then 
\[
 N(h) = \exp \ell(\wt\gamma).
\]
and $\det h$ is just the parity of the bounces of $\wt\gamma$ of the boundary of $\wt\Gamma\backslash\h$.

\section{A few conjectures}\label{sec:outlook}

In Section~\ref{sec:systems} we started with discretizations for the geodesic flow which give rise to finite-term transfer operators. For cofinite discrete Hecke triangle groups, the highly regular eigenfunctions with eigenvalue $1$ of the respective transfer operators are in bijection with the Maass cusp forms for these lattices \cite{Moeller_Pohl}. We expect that for non-cofinite Hecke triangle groups, a similar relation holds between the eigenfunctions of the transfer operators $\mc L_{R,s}$ (or $\mc L_{P,s}$) and the residues at the resonances $s$.

Furthermore, for the billiard flow we used the two weight functions $\pm 1$. For cofinite discrete Hecke triangle groups, these weights correspond to Neumann ($+$) resp.\@ Dirichlet ($-$) boundary conditions, and thus allow to separate the odd and the even spectrum. We expect that the same interpretation holds for non-cofinite Hecke triangle groups. A first hint towards such a result is contained in \cite{Guillope}, who, however, requires \textit{compact} totally geodesic boundaries. Moreover, we expect that the finite-term weighted transfer operators provide a kind of period functions for odd resp. even residues at resonances.


\providecommand{\bysame}{\leavevmode\hbox to3em{\hrulefill}\thinspace}
\providecommand{\MR}{\relax\ifhmode\unskip\space\fi MR }
\providecommand{\MRhref}[2]{%
  \href{http://www.ams.org/mathscinet-getitem?mr=#1}{#2}
}
\providecommand{\href}[2]{#2}

\end{document}